\newtheorem{thm}{Theorem}[section]
\theoremstyle{plain}
\newtheorem{cor}[thm]{Corollary}
\newtheorem{prop}[thm]{Proposition}
\theoremstyle{remark}
\newtheorem{remark}[thm]{Remark}
\newenvironment{subproof}[1][\proofname]
{%
  \begin{proof}[#1]%
}
{%
  \end{proof}%
}
\newcommand{\R}{\mathbb{R}}
\newcommand{\E}{\mathbb{E}}
\newcommand{\T}{\mathcal{T}}
\newcommand{\cE}{\mathcal{E}}
\numberwithin{equation}{section}
\begin{document}
\title[Quasilinear Parabolic Eqns and Continuous Max Reg]
{On Quasilinear Parabolic Equations and Continuous Maximal Regularity}

\author[J. LeCrone]{Jeremy LeCrone}
\address{Department of Mathematics \& Computer Science \\ University of Richmond\\ Richmond, VA USA}
\email{jlecrone@richmond.edu}%
\urladdr{http://math.richmond.edu/faculty/jlecrone/}

\author[G. Simonett]{Gieri Simonett}
\address{Department of Mathematics \\ Vanderbilt University \\ Nashville, TN USA}
\email{gieri.simonett@vanderbilt.edu}%
\urladdr{http://www.vanderbilt.edu/math/people/simonett}

\thanks{This work was supported by a grant from the Simons Foundation (\#426729, Gieri Simonett).}

\subjclass[2010]{Primary 35K90, 35K59, 35B30, 35B35; Secondary 53C44, 35K93} 

\keywords{quasilinear parabolic equations, continuous maximal regularity, generalized principle
of linearized stability, critical spaces, well--posedness, surface diffusion flow, stability of cylinders,
stability of spheres}%

\begin{abstract}
We consider a class of abstract quasilinear parabolic problems with lower--order terms 
exhibiting a prescribed singular structure. 
We prove well--posedness and Lipschitz continuity of associated semiflows.
Moreover, we investigate global existence of solutions and we extend 
the generalized principle of linearized stability to settings with initial values in critical spaces.
These general results are applied to the surface diffusion flow in various settings.

\end{abstract}

\maketitle

\section{Introduction}

In this paper, we consider abstract quasilinear parabolic evolution equations given by
\begin{equation}\label{QuasiEqn}
	\begin{cases}
		\dot{u} + A(u)u = F_1(u) + F_2(u), &\text{for $t >0$},\\
		u(0) = x,
	\end{cases}
\end{equation}
for which we extend previous well--posedness and global existence results in the setting of continuous maximal regularity.
As a particular feature, we admit nonlinearities $F_2$ with a prescribed singular structure.

In more detail, we assume that $(E_1,E_0)$ is a pair of Banach spaces so that $E_1$ is densely embedded in $E_0$,
and we seek solutions of \eqref{QuasiEqn} in time weighted spaces 
\[
\E_{1,\mu}(J):= BC_{1 - \mu}^1(J,E_0) \cap BC_{1 - \mu}(J,E_1),
\]
where $\mu\in (0,1)$ and $J=[0,T]$, see Section~\ref{sec:WellPosedness} for a precise definition.
Let $V_\mu\subset E_\mu$ be an open subset of the continuous interpolation space 
$E_{\mu} := (E_0, E_1)_{\mu, \infty}^{0}$,
and set $\E_{0,\mu}(J):=BC_{1-\mu}(J,E_0)$.
Then we assume that the functions $(A,F_1,F_2)$ satisfy the following conditions.
\begin{itemize}
	\item[{\bf (H1)}] Local Lipschitz continuity of $(A, F_1)$: 
		\begin{equation}\label{Regularity}
			(A, F_1) \in C^{1-} \big(V_{\mu}, \mathcal{M}_{\mu}(E_1, E_0) \times E_0 \big).
		\end{equation}
	Here $\mathcal{M}_{\mu}(E_1, E_0)$ denotes the set of  bounded linear operators
	$B \in \mathcal{L}(E_1,E_0)$ for which $(\E_{1,\mu}(J), \E_{0,\mu}(J))$
	is a pair of maximal regularity for some $T>0$ (and hence all $T>0$). Therefore,
	$B \in \mathcal{M}_{\mu}(E_1,E_0)$ if and only if for every 
	$(f,x) \in \E_{0,\mu}(J) \times E_\mu$ there exists a unique function $u \in \E_{1,\mu}(J)$
	such that 
	\[
		\dot{u}(t) + Bu(t) = f(t) \quad \text{for } t \in \dot J , \quad \text{and } u(0) = x.
	\]

	\item[{\bf (H2)}] Structural regularity of $F_2$: \\
	        There exists a number $\beta\in (\mu,1)$  such that 
	        $F_2: V_\mu\cap E_\beta\to E_0$.
	        Moreover, there are numbers  $\beta_j \in [\mu, \beta]$, $\rho_j \ge 0$ and 
	        $m \in \mathbb{N}$ with 
	     \begin{equation}\label{Criticality}
			\frac{\rho_j(\beta - \mu) + (\beta_j  - \mu)}{1 - \mu} \le 1, \qquad 
			\text{for all $j = 1, \ldots, m$},
		\end{equation}
		so that for each $x_0 \in V_{\mu}$ and $R > 0$
		there is a constant $C_R = C_R(x_0) > 0$ for which the estimate
		\begin{equation}\label{Structural}
			|F_2(x_1) - F_2(x_2)|_{E_0} \le 
				C_R \sum_{j = 1}^{m} \big( 1 + |x_1|_{E_{\beta}}^{\rho_j} + 
					|x_2|_{E_{\beta}}^{\rho_j} \big) |x_1 - x_2|_{E_{\beta_j}}
		\end{equation}
		holds for all $x_1, x_2 \in \bar{B}_{E_{\mu}}(x_0,R) \cap (V_\mu \cap E_{\beta}).$	
	
\end{itemize}
Following the convention introduced in Pr\"uss, Wilke \cite{PW17}, we call the index $j$ \textit{subcritical} 
if \eqref{Criticality} is a strict inequality, and  \textit{critical} in case equality holds in \eqref{Criticality}. 
As $\beta_j\le \beta<1$, any $j$ with $\rho_j=0$ is subcritical.
Furthermore, \eqref{Criticality} is equivalent to $\rho_j\beta+\beta_j- 1\le \rho_j\mu$.
Hence, the minimum value of $\mu$ is given by
\begin{equation*}
\mu_{\rm crit}:=\beta-\min_{\rho_j\neq 0} \left(\frac{1-\beta_j}{\rho_j}\right).
\end{equation*}
The number $\mu_{\rm crit}$ is called the \textit{critical weight} and $E_{\mu_{\rm crit}}$ a \textit{critical space}.

In the last decades, there has been an increasing interest in finding critical spaces for nonlinear parabolic partial differential equations. 
As a matter of fact, there is no generally accepted definition 
in the mathematical literature concerning the notion of critical spaces.
One possible definition may be based on the idea of a `largest space of initial data such that the given PDE is well-posed.'
Critical spaces are often introduced as  `scaling invariant spaces,' provided the underlying PDE enjoys a scaling invariance.
It has been shown in~\cite{PSW18} that the concept of \textit{critical weight} and \textit{critical space} introduced there 
(and also used in this paper) captures and unifies
the idea of largest space and scaling invariant space. In more detail, it has been shown in~\cite{PSW18} that
 $E_{\mu_{\rm crit}} $ is, in a generic sense, the largest space of initial data for which the given equation is well-posed, and 
that  $E_{\mu_{\rm crit}}$ is scaling invariant, provided the given equation admits a scaling.

Our approach for establishing well--posedness of \eqref{QuasiEqn} relies on the concept of 
\textit{continuous maximal regularity} in time-weighted spaces and extends previous 
results by Angenent~\cite{An90}, Clement and Simonett~\cite{CS01}, Lunardi~\cite{Lun95},
and Asai~\cite{Asai12}.
The results parallel those in \cite{LPW14, PSW18, PW17}, 
where well--posedness of \eqref{QuasiEqn} is studied by means of maximal $L_p$-regularity in time-weighted function spaces.

Leveraging the singular structure of $F_2,$ along with inequalities from interpolation theory 
and continuous maximal regularity, we prove local well--posedness of \eqref{QuasiEqn} 
via a fixed point argument.
Allowing for rough initial values in $E_\mu$ with $\mu\ge \mu_{\rm crit}$, 
we prove Lipschitz continuity of the 
associated semiflow on $V_\mu$ and derive conditions for global 
well--posedness and asymptotic behavior of solutions near normally stable equilibria. 
A key feature of our results is that dynamic properties of solutions are controlled in the 
topology of $E_\mu$, rather than requiring further control in stronger topologies 
of $E_\beta$ or $E_1$ as solutions regularize.

In particular, we prove that a priori bounds in the topology of $E_\mu$ yield global existence.
Moreover, we extend the generalized principle of linearized stability (c.f. \cite{PSZ09b,PSZ09a}),
proving that solutions with initial data that is $E_\mu$--close to a normally stable equilibrium 
will converge exponentially fast to a nearby equilibrium.

As a particular application of our abstract results, we consider the
surface diffusion flow, a  geometric evolution equation acting on orientable hypersurfaces. 
Given a fixed reference manifold $\Sigma \subset \R^n$, 
we consider the evolution of surfaces $\Gamma(t)$ defined in normal direction over 
$\Sigma$ via time--dependent height functions $h : \Sigma \times \R^+ \to \R.$
The governing equation for surface diffusion is then expressed as a fourth--order, 
parabolic evolution law acting on $h= h(t)$  
and we look for solutions in the setting of little--H\"older continuous functions; i.e. 
\[
	E_0 := bc^{\alpha}(\Sigma) \quad \text{and} \quad E_1 := bc^{4 + \alpha}(\Sigma),
		\quad \text{for some $\alpha \in (0,1).$}
\]

Considering the setting of $\Sigma = \mathcal{C}_r \subset \R^3,$ 
an infinite cylinder of radius $r >~0$,
we work with height functions $h(t) : \mathcal{C}_r \to (-r, \infty)$ which produce so--called
{\it axially--definable} surfaces $\Gamma(h),$ as in \cite{LS16}. 
We show that the resulting surface diffusion flow can be cast as a quasilinear 
parabolic evolution equation in the form \eqref{QuasiEqn} with
\[
	E_\mu := bc^{1 + \alpha}(\Sigma) \quad \text{and} \quad 
		E_\beta := bc^{3 + \alpha}(\Sigma),
\]
from whence we have $\mu = 1/4$ and $\beta = 3/4$ in this setting. 
Explicitly expressing the singular nonlinearity $F_2$, we employ interpolation theory estimates
to confirm the necessary singular structure {\bf (H2)} is satisfied on $V_\mu \cap E_\beta,$
where $V_\mu$ is an appropriately chosen open subset of $E_\mu$.
The appearance of several critical indices supports the idea that $bc^{1+\alpha}(\Sigma)$
is in fact a critical space for surface diffusion flow.

Applying our general results to initial data $h_0 \in bc^{1+\alpha}(\mathcal{C}_r)$ we extend 
well--posedness from \cite[Proposition~3.2]{LS16} to surfaces with only one H\"older continuous derivative.
Further, we extend \cite[Proposition 2.2, 2.3]{LS13} by restricting to functions
$h_0 \in bc^{1+\alpha}_{symm}(\mathcal{C}_r)$ exhibiting azimuthal symmetry around the 
cylinder $\mathcal{C}_r$. Further, enforcing periodicity of $h_0$ along the central axis of $\mathcal{C}_r$, 
 we show stability and instability of cylinders under
periodic perturbations with H\"older control on only first order derivatives. In particular, when 
$r > 1$, we show that $2\pi$--periodic $\Gamma(h_0)$ surfaces that are $bc^{1+\alpha}$--close
to $\mathcal{C}_r$ give rise to global solutions to surface diffusion flow converging to a
nearby cylinder exponentially fast. On the other hand, when $r < 1$, we show that 
there exist $2\pi$--periodic perturbations
which are arbitrarily close to $\mathcal{C}_r$ in $bc^{1+\alpha}$ for which solutions escape a
neighborhood of the cylinder.
We also direct the reader to \cite{BBW98}, for additional information 
concerning the surface diffusion flow for axisymmetric surfaces.

Taking $\Sigma$ to be an arbitrary compact, connected, immersed manifold, we demonstrate 
well--posedness of surface diffusion for initial data in $bc^{1 + \alpha}(\Sigma)$ that are
sufficiently close to the manifold $\Sigma$, an extension of \cite[Theorem 1.1]{EMS98}.
Further, in case $\Sigma \subset \R^n$ is a Euclidean sphere, we apply our generalized stability 
result to yield stability of the family of spheres 
under perturbations which require control on only first--order derivatives.
This result extends \cite[Theorem 1.2]{EMS98}, 
where initial values in  $ bc^{2+\alpha}(\Sigma)$ are considered.

Working also in the setting of surfaces parameterized over a sphere,
Escher and Mucha \cite{EM10} show
that small perturbations in the topology of Besov spaces $B_{p,2}^{5/2-4/p}(\Sigma)$ 
exist globally and converge exponentially fast to a sphere.
Although the topologies of these Besov spaces and our little--H\"older spaces are not 
easily comparable, we note that our stability results hold for any spacial dimension $n$,
while the regularity of perturbations in \cite{EM10} changes with $n$.
In particular, Escher and Mucha enforce the bound $p > \frac{2n+6}{3}$, which
they note only guarantees existence of lower regularity perturbations in 
$C^{1+\alpha}(\Sigma) \setminus C^{2}(\Sigma)$ when $n < 9$. 
(Notice that the authors in \cite{EM10} consider surfaces in $\R^{n+1}.)$

Regarding a different approach to stability of spheres, we refer to
\cite{MW11,W11,W12} where the lifespan of solutions, and convergence to equilibria, 
is controlled via $L_2$--estimates of the second fundamental form.
We observe that our assumptions on initial data allow for initial surfaces on which the 
second fundamental form may not
be defined, so that our results are not contained in \cite{MW11,W11,W12}.

As a final remark on surface diffusion flow, we mention that several authors have considered the flow
of surfaces with rough initial data when $\Gamma \subset \R^{n+1}$ is given as the graph 
of a function over a domain $\Omega \subseteq \R^n$, 
see \cite{Asai12,KL12,LPW14} for instance. In \cite{KL12}, Koch and Lamm 
prove global existence of solutions to surface diffusion flow with initial surfaces that are merely 
Lipschitz continuous, and they prove analytic dependence on initial data. However, Koch and Lamm 
work in the setting of entire graphs (i.e. $\Omega = \R^n$) 
and require a smallness condition on Lipschitz
norm which seems to make it difficult to translate their result to more general settings.
Working also in the setting of entire graphs, the conclusions of Asai in \cite{Asai12} are 
closest to our current results, as the author works in spaces of little--H\"older continuous functions. 
We refer to Remark~\ref{rem:BeforeMainProof} for a detailed account 
of the results in~\cite{Asai12}.
In \cite{LPW14}, the authors approach surface diffusion flow from the setting of $L_p$ maximal 
regularity on a bounded domain $\Omega$, producing well--posedness for initial data in Besov spaces 
$B_{qp}^{4\mu-4/p}(\Omega),$ for an appropriate choice of $\mu$, $p$, and $q$. 

We briefly outline the current paper. 
In Section~\ref{sec:WellPosedness}, we 
state and prove our main result, Theorem~\ref{thm:MainResult}.
We conclude Section~\ref{sec:WellPosedness} with an extension of well--posedness, giving
continuous dependence on initial data in stronger topologies $E_{\bar \mu}$.

In Section~\ref{sec:Stability}, we prove equivalence between measuring stability of equilibria
in the space $E_\mu$ and measuring stability in a smaller space $E_{\bar \mu}$,
$\bar \mu \in [\beta,1)$. We then prove the generalized principle of linearized stability 
for perturbations in $E_\mu$.

In Section~\ref{sec:SDFlow} we apply our results to various settings for surface diffusion flow.
Beginning with {\it axially--definable} surfaces parameterized
over an infinite cylinder, we conclude well--posedness of surface diffusion flow for 
general perturbations in $bc^{1+\alpha}$. Then we enforce periodicity in the general
setting to establish stability / instability of cylinders under perturbations in $bc_{per}^{1+\alpha}$
(with radius $r$ above / below the threshold $r = 1$), before producing similar results 
in the setting of axisymmetric surfaces. We end Section~\ref{sec:SDFlow} with the 
setting of surfaces defined over an arbitrary compact reference manifold $\Sigma$,
and establish well--posedness and stability of spheres with initial data in $bc^{1+\alpha}.$

\section{Well--Posedness of \eqref{QuasiEqn}}\label{sec:WellPosedness}
In this section, we formulate and prove our main result concerning solvability of \eqref{QuasiEqn}.
Moreover, we formulate and prove conditions for global existence of solutions.
We start with the definition and elementary properties of \textit{time--weighted continuous spaces}
(see \cite[Section 2]{CS01} for more details).

Let $E$ be an arbitrary Banach space
and define the spaces of \textit{time--weighted} continuous functions
\[
	BC_{1 - \mu}(J, E) := \Big\{ v \in C(\dot{J}, E) : 
			[ t \mapsto t^{1 - \mu}v(t)] \in BC(\dot{J},E), \ \lim_{t \to 0^+} |t^{1 - \mu} v(t)|_{E}  = 0\Big\}
\]
and
\[
	BC_{1-\mu}^1(J,E) := \{ v \in C^1(\dot{J}, E) : v, \dot{v} \in BC_{1-\mu}(J, E) \},
\]
where $J:=[0,T],$ $\dot J:=(0,T],$ and $\mu \in (0,1)$. We also set
\[
	BC_{0}(J,E) := C(J, E), \quad \text{and} \quad BC_0^1(J,E) := C^1(J,E).
\]

Given Banach spaces $E_0$ and $E_1$ so that $E_1$ is densely embedded in $E_0$, we define
\begin{equation}\label{MaxRegSpaces}
\begin{split}
	\E_{1,\mu}(J) &:= BC_{1 - \mu}^1(J,E_0) \cap BC_{1 - \mu}(J,E_1), \qquad \text{and}\\
	\E_{0,\mu}(J) &:= BC_{1 - \mu}(J,E_0),
\end{split}
\end{equation}
which are themselves Banach spaces when equipped with the norms
\[
\begin{split}
	\| v \|_{\E_{1,\mu}(J)} &:= \sup_{t \in J} t^{1 - \mu} 
		\big( |\dot{v}(t)|_{E_0} + |v(t)|_{E_1} \big), \qquad \text{and}\\
	\| v \|_{\E_{0,\mu}(J)} &:= \sup_{t \in J} t^{1 - \mu} |v(t)|_{E_0}
\end{split}
\]
respectively. Further, we note that the trace operator $\gamma : \E_{1,\mu}(J) \to E_0$ is 
well--defined and, assuming $\mathcal{M}_\mu(E_1,E_0) \ne \emptyset$ (as we do 
throughout), the trace space
$\gamma \E_{1,\mu}(J)$ coincides with the continuous interpolation space
$E_\mu := (E_0, E_1)_{\mu, \infty}^0$.

\begin{remark} \label{InterpConstants}
{\bf (a)}
The important inequality \eqref{Criticality} should be viewed in relation to applications
of interpolation we will encounter frequently in the article. 
In particular, if we set 
\[
	\alpha := \frac{\beta - \mu}{1 - \mu} \qquad \text{and} \qquad
	\alpha_j := \frac{\beta_j - \mu}{1 - \mu}
\]
then $E_{\beta} = (E_\mu, E_1)_{\alpha,\infty}^0$ and 
$E_{\beta_j} = (E_\mu, E_1)_{\alpha_j,\infty}^0,$ for $j = 1, \ldots, m .$
Then, given $x, y \in E_1$ and $t > 0$, it follows that
\begin{align}\label{CriticalInterp}
	t^{1 - \mu}|x|_{E_\beta}^{\rho_j}|y|_{E_{\beta_j}} 
		&\le c_j 
			t^{1 - \mu}\big(|x|_{E_\mu}^{\rho_j(1 - \alpha)}|x|_{E_1}^{\rho_j \alpha}\big)
			\big(|y|_{E_\mu}^{(1 - \alpha_j)}|y|_{E_1}^{\alpha_j}\big)\\
		&\le  C_0 
			t^{(1-\mu)(1 - \rho_j \alpha - \alpha_j)}
			|x|_{E_\mu}^{\rho_j(1 - \alpha)}|t^{1-\mu} x|_{E_1}^{\rho_j \alpha}
			|y|_{E_\mu}^{(1 - \alpha_j)}|t^{1 - \mu} y|_{E_1}^{\alpha_j}. \notag
\end{align}
Here the constant $c_j = c_j(\alpha, \alpha_j)$ is the product of interpolation constants from
$E_\beta$ and $E_{\beta_j}$ (c.f. \cite[Proposition 2.2.1]{Am95}), while 
$C_0 = C_0(\alpha, \alpha_1, \ldots, \alpha_m)$ is an upper bound for the family
of all such constants, $j = 1, \ldots, m$.

\medskip
{\bf (b)}
In the proof of Theorem~\ref{thm:MainResult} below,
we address both subcritical and critical indices $j$.
The difference in approaches to these two cases can 
be viewed in context of \eqref{CriticalInterp}.
In particular, note that when $j$ is subcritical, the exponent 
$$(1-\mu)(1-\rho_j \alpha - \alpha_j)$$ 
on $t$ is strictly positive, since 
$\rho_j \alpha + \alpha_j$ is exactly the left--hand side of \eqref{Criticality}.
Meanwhile, when $j$ is critical we have a trivial exponent on 
$t$, but it must hold that $\rho_j > 0$ in this case. Thus, when
$j$ is critical we focus on the term $|t^{1-\mu} x|_{E_1}^{\rho_j \alpha}$ which
has a positive exponent (a property
not necessarily holding in the subcritical case).
\end{remark}

\goodbreak

\begin{thm}\label{thm:MainResult}
	Suppose $A, F_1$ and $F_2$ satisfy conditions {\bf (H1)--(H2)}.
\begin{itemize}
\vspace{1mm}\item[{\bf (a)}] {\bf (Local Solutions)}
	Given any $x_0 \in V_{\mu}$, there exist positive constants $\tau = \tau(x_0)$, 
	$\varepsilon = \varepsilon(x_0)$, and $\sigma = \sigma(x_0)$ such that
	\eqref{QuasiEqn} has a unique solution
	\[
		u(\cdot, x) \in  \E_{1,\mu}([0,\tau]):= 
		BC_{1 - \mu}^1([0,\tau], E_0) \cap BC_{1 - \mu}([0, \tau], E_1)
	\]  
	for all initial values $x \in \bar{B}_{E_{\mu}}(x_0, \varepsilon)$.
	Moreover, 
\begin{equation}\label{Lip-mu}
\qquad\qquad \| u(\cdot, x_1) - u(\cdot, x_2) \|_{\E_{1,\mu}([0,\tau])}  
	\le \sigma | x_1 - x_2 |_{E_\mu},
	\quad  x_1, x_2 \in \bar{B}_{E_{\mu}}(x_0, \varepsilon).
\end{equation}
\item[{\bf (b)}] {\bf (Maximal Solutions)}
Each solution with initial value $x_0\in V_\mu$ exists on a maximal interval 
$J(x_0) := [0,t^+) = [0,t^+(x_0))$ and enjoys the regularity
$$
	u(\cdot,x_0)\in C([0,t^+), E_\mu)\cap C((0,t^+), E_1).
$$
\item[{\bf (c)}] {\bf (Global Solutions)}
If the solution $u(\cdot, x_0)$ satisfies the conditions: 
\begin{itemize}
	\vspace{1mm}\item[(i)] $u(\cdot, x_0) \in UC(J(x_0), E_\mu)$ and 
	\vspace{1mm} \item[(ii)] there exists $\eta > 0$ so that ${\rm dist}_{E_\mu}(u(t,x_0), \partial V_\mu) > \eta$
		for all $t \in J(x_0)$,
\end{itemize}
\vspace{1mm}
then it holds that $t^+(x_0) = \infty$ and so $u(\cdot, x_0)$ is a global solution of \eqref{QuasiEqn}.
Moreover, if the embedding $E_1 \hookrightarrow E_0$ is compact, then condition (i) may be replaced
by the assumption:
\begin{itemize}
	\vspace{1mm}\item[(i.a)] the orbit $\{ u(t, x_0) : t \in [\tau, t^+(x_0))\}$ is bounded in $E_\delta$
		for some $\delta \in (\mu, 1]$ and some $\tau\in (0, t^+(u_0)).$
\end{itemize}
\end{itemize}
\end{thm}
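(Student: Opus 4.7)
The strategy is a Banach fixed point argument in $\E_{1,\mu}([0,\tau])$. Fix $x_0 \in V_\mu$ and set $A_0 := A(x_0) \in \mathcal{M}_\mu(E_1,E_0)$. Maximal regularity supplies a bounded linear solution operator $L(f,x) := u$ for $\dot u + A_0 u = f$, $u(0) = x$, whose norm can be taken uniform for $\tau \in (0, \tau_0]$. Rewriting \eqref{QuasiEqn} as
\[
\dot u + A_0 u = (A_0 - A(u))u + F_1(u) + F_2(u), \qquad u(0) = x,
\]
I would set up the fixed point map $\Phi(v) := L\bigl((A_0 - A(v))v + F_1(v) + F_2(v),\, x\bigr)$ on a closed ball $\Sigma(\tau, r) \subset \E_{1,\mu}([0,\tau])$ centered at the reference linear evolution of $x_0$, and show that for $\tau, r, \varepsilon$ small enough depending only on $x_0$, $\Phi$ is a self-map and strict contraction on $\Sigma(\tau,r)$ for every $x \in \bar B_{E_\mu}(x_0,\varepsilon)$. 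The Lipschitz estimate \eqref{Lip-mu} then drops out by feeding the difference of two right-hand sides into the linear solution operator.

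\textbf{Key estimates.} The quasilinear piece and $F_1$ are controlled via (H1): local Lipschitz continuity of $A$ and $F_1$, combined with $\E_{1,\mu}([0,\tau]) \hookrightarrow BC([0,\tau], E_\mu)$, yields contraction bounds of the form $C(\tau^{1-\mu} + \varepsilon) \|v - w\|_{\E_{1,\mu}}$ that are absorbed by shrinking $\tau$ and $\varepsilon$. The singular term $F_2$ is the delicate one. Multiplying (H2) by $t^{1-\mu}$ produces
\[
t^{1-\mu} |F_2(v(t)) - F_2(w(t))|_{E_0} \le C_R \sum_{j=1}^m \bigl(1 + |v(t)|_{E_\beta}^{\rho_j} + |w(t)|_{E_\beta}^{\rho_j}\bigr)\, t^{1-\mu}|v(t) - w(t)|_{E_{\beta_j}}.
\]
Inserting \eqref{CriticalInterp} into both the $\rho_j$ factor and the $(v-w)$ factor rewrites each summand as a product whose subcritical indices carry a positive power $t^{(1-\mu)(1-\rho_j\alpha - \alpha_j)}$ (absorbed by shrinking $\tau$), and whose critical indices (for which necessarily $\rho_j > 0$) instead carry a factor $|t^{1-\mu} v(t)|_{E_1}^{\rho_j \alpha}$ that can be absorbed by shrinking the radius $r$ in $\E_{1,\mu}$.

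\textbf{Parts (b) and (c).} The maximal interval of existence in (b) is produced by standard concatenation of local solutions, with uniqueness from (a) ensuring well-definedness; the regularity $u(\cdot, x_0) \in C([0, t^+), E_\mu) \cap C((0, t^+), E_1)$ follows from the definition of $\E_{1,\mu}$ together with the trace identification $\gamma \E_{1,\mu} = E_\mu$. For (c), I would argue by contradiction assuming $t^+ < \infty$. Under (i), uniform continuity in $E_\mu$ gives $u(t) \to u^\ast \in E_\mu$ as $t \to t^+$, and (ii) places $u^\ast \in V_\mu$. Applying (a) at $u^\ast$ produces a uniform local existence time $\tau(u^\ast) > 0$ for initial data in an $E_\mu$-neighborhood of $u^\ast$; restarting from $u(t)$ for $t$ sufficiently close to $t^+$ extends the solution past $t^+$, contradicting maximality. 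Under (i.a), the compact embedding $E_1 \hookrightarrow E_0$ combined with interpolation upgrades to a compact embedding $E_\delta \hookrightarrow E_\mu$ for $\delta > \mu$, so boundedness of the orbit in $E_\delta$ yields precompactness in $E_\mu$. Continuity of $u$ in $E_\mu$ on $(0,t^+)$ then allows extracting a subsequence $u(t_n) \to u^\ast$ in $E_\mu$, with $u^\ast \in V_\mu$ thanks to (ii), after which the same extension argument applies.

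\textbf{Main obstacle.} The principal technical hurdle is the critical case of (H2), where shrinking $\tau$ gains nothing because the exponent of $t$ in \eqref{CriticalInterp} vanishes. The only remaining mechanism is to trade smallness in the maximal-regularity topology $\E_{1,\mu}$ for smallness in the critical factor $|t^{1-\mu} v(t)|_{E_1}^{\rho_j \alpha}$, which is viable precisely because $\rho_j > 0$ at critical indices. Simultaneously tuning $\tau$, $r$, and $\varepsilon$ so that the quasilinear contraction, the $F_1$ estimate, the subcritical part of $F_2$, and the critical part of $F_2$ all contribute contraction constants summing to less than one is the core technical balancing act.
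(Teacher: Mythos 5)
Your approach to (a) mirrors the paper's construction closely: the same reference linear evolution $u_{x_0}^\star$ as the center of the fixed-point set, the same mapping $\T_x$ (your $\Phi$), and the same use of \eqref{CriticalInterp} to split subcritical indices (absorbed by shrinking $\tau$) from critical indices (absorbed by shrinking $r$, leveraging $\rho_j > 0$). Your treatment of (c) also matches the standard restart-by-contradiction argument.

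The one genuine gap is in (b). The claim that $u(\cdot, x_0) \in C((0, t^+), E_1)$ ``follows from the definition of $\E_{1,\mu}$ together with the trace identification'' does not survive scrutiny at the concatenation points. From $u_2 := u(\tau_1 + \cdot, x_0) \in \E_{1,\mu}([0,\tau_2])$ one only learns $t^{1-\mu}|u_2(t)|_{E_1} \to 0$ as $t \to 0^+$ and $u_2 \in C([0,\tau_2], E_\mu)$; this does \emph{not} give $u_2(t) \to x_1 := u(\tau_1, x_0)$ in the $E_1$-norm as $t \to 0^+$, which is exactly what $E_1$-continuity of the glued solution across $t = \tau_1$ requires. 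The weighted space $\E_{1,\mu}$ allows $|u_2(t)|_{E_1}$ to be unbounded near $t = 0$. The paper closes this gap by restarting the local solution slightly earlier, at $u_1(\tau_1 - \delta) \in E_1$, and invoking uniqueness to identify the resulting solution $v \in \E_{1,\mu}([0,\tau_2])$ with the glued solution near $\tau_1$; the interior $E_1$-continuity of $v$ (away from its own initial time, hence in a neighborhood of $t = \delta$) then transfers. Without some version of this argument, the regularity assertion in (b) is not established by what you wrote.
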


Before proceeding to the proof of the theorem we add some remarks.
We first note that the embedding
$\E_{1,\mu}([0,\tau]) \hookrightarrow C([0,\tau],E_\mu),$
see \cite[Lemma 2.2(b)]{CS01},
immediately implies the following result. 

\begin{cor}\label{SemiflowRegularity}
Under the assumptions of Theorem~\ref{thm:MainResult}, there
exists a positive constant $c = c(x_0)$ so that 
\[
	\| u(\cdot, x_1) - u(\cdot,x_2) \|_{C([0,\tau],E_\mu)} \le 
		c|x_1 - x_2|_{E_\mu} \qquad 
		\text{for $x_1, x_2 \in \bar{B}_{E_\mu}(x_0, \varepsilon).$}
\]	
It thus follows that the map $[(t,x) \mapsto u(t,x)]$ defines
a locally Lipschitz continuous semiflow on $V_\mu.$
\end{cor}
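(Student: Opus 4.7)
The plan is to read this corollary as a direct repackaging of Theorem~\ref{thm:MainResult}(a) through the trace-type embedding $\E_{1,\mu}([0,\tau]) \hookrightarrow C([0,\tau], E_\mu)$ recorded in \cite[Lemma~2.2(b)]{CS01}, followed by the standard uniqueness-based argument for the semiflow property.

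First I would fix $x_0 \in V_\mu$ and invoke Theorem~\ref{thm:MainResult}(a) to obtain the constants $\tau, \varepsilon, \sigma > 0$ and unique solutions $u(\cdot, x) \in \E_{1,\mu}([0,\tau])$ for $x \in \bar{B}_{E_\mu}(x_0, \varepsilon)$, together with the bound \eqref{Lip-mu}. Letting $M = M(\tau, \mu)$ denote the norm of the embedding $\E_{1,\mu}([0,\tau]) \hookrightarrow C([0,\tau], E_\mu)$ and applying that embedding to the difference $u(\cdot, x_1) - u(\cdot, x_2)$ immediately yields the claimed pointwise estimate with $c := M\sigma$.

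For the semiflow statement, I would extend each local solution to its maximal interval $J(x) = [0, t^+(x))$ via Theorem~\ref{thm:MainResult}(b), which also supplies $u(\cdot, x) \in C(J(x), E_\mu)$, and define $\Phi(t, x) := u(t, x)$ on $\{(t,x) : x \in V_\mu, \ t \in J(x)\}$. The identity $\Phi(0, x) = x$ is built into the definition, and the cocycle identity $\Phi(t+s, x) = \Phi(t, \Phi(s, x))$ follows from uniqueness in part (a): both sides solve \eqref{QuasiEqn} with common initial value $\Phi(s,x) \in V_\mu$. Local Lipschitz continuity in $x$ on any compact sub-interval of $J(x_0)$ then follows by covering the compact trajectory segment with finitely many neighborhoods of the form produced above and iterating the estimate just established; joint continuity in $(t,x)$ drops out of the triangle inequality, combining the uniform-in-$t$ Lipschitz bound in $x$ with continuity of each individual trajectory in $t$.

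No substantial obstacle is expected: all the analytic work has been absorbed into Theorem~\ref{thm:MainResult}, and the only new ingredient is the embedding constant $M$ coming from the continuous inclusion of the maximal regularity space into $C([0,\tau], E_\mu)$. The one place to be mildly careful is iterating the local Lipschitz estimate along a trajectory to obtain uniform Lipschitz control on a prescribed compact time interval, but this is routine since the constants in part (a) depend continuously on the base point $x_0 \in V_\mu$.
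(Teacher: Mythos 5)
Your argument matches the paper's: the estimate is obtained by combining the Lipschitz bound \eqref{Lip-mu} with the embedding $\E_{1,\mu}([0,\tau]) \hookrightarrow C([0,\tau],E_\mu)$ from \cite[Lemma 2.2(b)]{CS01}, and the semiflow statement then follows by uniqueness and iteration of the local estimate, exactly as the paper intends. The paper states the corollary as an immediate consequence without spelling out the semiflow details, so your additional elaboration is consistent with and completes the same route.
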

\medskip

\begin{remark}\label{rem:BeforeMainProof}
{\bf (a)} We recall briefly that local Lipschitz continuity of a semiflow on $V_\mu$ means that
\[
	\mathcal{D} := \bigcup_{x \in V_\mu} [0,t^+(x)) \times \{x\}
\]
is an open set in $\R_+ \times V_\mu,$ the map $[(t,x) \mapsto u(t,x)]$ is continuous
on $\mathcal{D},$ and for all $(t_0, x_0) \in \mathcal{D}$ there exists a product neighborhood
$U \times V \subset \mathcal{D}$ and $c > 0$ so that 
\[
	|u(t,x) - u(t,y)|_{V_\mu} \le c|x-y|_{V_\mu} \qquad \text{for $(t,x), (t,y) \in U \times V$.}
\]

\medskip
{\bf (b)}
Local well--posedness of \eqref{QuasiEqn} was also considered by Asai \cite{Asai12} 
in the presence of a singular right--hand side $F: V_\mu \cap E_1 \to E_0$. 
In particular, the author assumes that $F$ satisfies 
\[
	|F(x_1) - F(x_2)|_{E_0} \le C_R 
		\big( 1 + |x_1|^p_{E_1} + |x_2|^p_{E_1} \big) |x_1 - x_2|_{E_\theta}
\]
for all $x_1, x_2 \in E_1 \cap B_{E_\mu}(x_0, R).$ 
Here the author has $p$ and 
$E_\theta := (E_0, E_1)_{\theta, \infty}^0$ appropriately chosen, with 
$\theta \in [\mu,1),$ so that $p + (\theta - \mu)/(1 - \mu)< 1$.
This setting is similar to our condition {\bf (H2)} if one allows $j =1$, $\beta = 1$,
$\rho_j = p$, and $\beta_j = \theta$, whereby it follows that Asai
only considers subcritical weights.
Further, we note that in \cite[Theorem 1.1]{Asai12} the author proves H\"older continuous
dependence on initial data in $V_\mu$, whereas we obtain Lipschitz continuity.
No additional geometric properties for solutions are established in \cite{Asai12}.
\end{remark}
\medskip

\begin{proof}[Proof of Theorem 2.2]
(a)
We follow the structure of related proofs in \cite{LPW14} and \cite{PW17}, where $L_p$--maximal 
regularity is assumed. We note that sub--critical and critical indices required distinct
proofs in \cite{LPW14} and \cite{PW17}, respectively, whereas both cases can be handled in the same
setting here.

Choose $x_0 \in V_\mu$ and fix $\varepsilon_0 > 0$ so that 
\[
	\bar{B}_{E_{\mu}}(x_0, \varepsilon_0) \subset V_{\mu}.
\]
Applying {\bf (H1)} and {\bf (H2)}, we obtain constants $L=L(\varepsilon_0) > 0$ and $C_{\varepsilon_0} > 0$ so that 
\begin{equation}\label{Structure1}
	\| (A,F_1)(x_1) - (A,F_1)(x_2) \|_{\mathcal{L}(E_1,E_0) \times E_0} \le L |x_1 - x_2|_{E_\mu},
\end{equation}
for $x_1, x_2 \in \bar{B}_{E_\mu}(x_0, \varepsilon_0),$ and
\begin{equation}\label{Structure2}
	|F_2(x_1) - F_2(x_2)|_{E_0} \le C_{\varepsilon_0} \sum_{j = 1}^m 
		\big( 1 + |x_1|_{E_{\beta}}^{\rho_j} + |x_2|_{E_{\beta}}^{\rho_j}\big) 
		|x_1 - x_2|_{E_{\beta_j}}
\end{equation}
for $x_1, x_2 \in \bar{B}_{E_{\mu}}(x_0, \varepsilon_0) \cap E_{\beta}$.

\medskip

It follows from {\bf (H1)} and \cite[Corollary 1]{LS11}
that $-A(x_0)$ generates a strongly continuous analytic semigroup on $E_0$ with domain $E_1$.
For each element $x \in V_\mu$, we define 
\[
	u_x^\star(t) := e^{-t A(x_0)}x
\]
which is in $\E_{1,\mu}(J_T)$, for any $T>0$ and solves
$\dot{u} + A(x_0) u = 0, \ u(0) = x$. \\

Furthermore, we fix positive constants $T_1>0,$ $M_1 \ge 1$, and $C_1>0$ so that, 
for all $J_T := [0,T] \subset J_{T_1}$ and $x_1, x_2 \in \bar{B}_{E_\mu}(x_0, \varepsilon_0)$, we have
\begin{equation}\label{StrongCont}
	\| u_{x_0}^\star - x_0 \|_{C(J_T, E_\mu)} 
	< \frac{\varepsilon_0}{3},
\end{equation}
\begin{equation}\label{TraceZero}
	\| u \|_{C(J_T, E_\mu)} \le M_1 \| u \|_{\E_{1,\mu}(J_T)}, \quad 
		\text{for $u \in \E_{1,\mu}(J_T)$ with $u(0) = 0,$}
\end{equation}
and
\begin{equation}\label{ICStars}
	M_1 \| u_{x_1}^\star - u_{x_2}^\star \|_{\E_{1,\mu}(J_T)} + 
		\| u_{x_1}^\star - u_{x_2}^\star \|_{C(J_T,E_\mu)} 
		\le C_1 |x_1 - x_2|_{E_\mu}.
\end{equation}
The previous inequalities are justified by strong continuity of the semigroup $e^{-tA(x_0)}$ in $E_\mu$,
\cite[Lemma 2.2(c)]{CS01}, and \cite[Equation (3.7)]{CS01}, respectively.

We will construct a contraction mapping 
on a closed subset of $\E_{1,\mu}(J_T)$ given by
\begin{equation}\label{WxJTr}
	W_x(J_T,r) := \{ v \in \E_{1,\mu}(J_T): v(0) = x, \text{ and } 
		\|v - u_{x_0}^\star\|_{\E_{1,\mu}(J_T)} \le r \},
\end{equation}
where $x \in E_\mu$, $J_T \subset J_{T_1}$, and $r > 0$.
The mapping we consider will be $\T_x$ which takes $v \in W_x(J_T,r)$
to the solution $w = \T_x(v)$ 
of the linear initial value problem\\
\begin{equation}\label{TauMap}
	\begin{cases}
		\dot{w} + A(x_0)w = (A(x_0) - A(v)) v + F_1(v) + F_2(v), & t\in \dot J_T,\\
		w(0) = x.
	\end{cases}
\end{equation}
Note that fixed points $v = \T_x(v)$ are solutions to the original problem 
\eqref{QuasiEqn} on $J_T$.
We proceed by first proving that $\T_x$ is well--defined (see Claims 1 and 2 below),
then we show that $\T_x$ is in fact a contraction mapping on $W_x(J_T,r)$
for $r, T$ and $x$ appropriately chosen (see Claims 3 and 4 below).

\medskip


\noindent\underline{\bf Claim 1:} For $r, T, \varepsilon$ chosen sufficiently small and positive, if 
$x \in \bar{\mathbb{B}}_{E_{\mu}}(x_0, \varepsilon)$ then 
$\| v - x_0 \|_{C(J_T,E_\mu)} \le \varepsilon_0$ for all $v \in W_x(J_T,r)$. 

\begin{subproof}[Proof of Claim 1] 
		For any $v \in W_x(J_T,r)$ note that $v(0) - u_x^\star(0) = 0$,
		thus \eqref{TraceZero} and the triangle inequality imply
		\[
			\|v - u_x^\star\|_{C(J_T,E_\mu)} \le 
				M_1 \|v - u_{x_0}^\star\|_{\E_{1,\mu}(J_T)} + 
				M_1 \|u_{x_0}^\star - u_x^\star\|_{\E_{1,\mu}(J_T)}.
		\]
		Applying  \eqref{StrongCont}, \eqref{ICStars}, 
		and \eqref{WxJTr}, we compute
		\begin{align}\label{Claim1Control}
			 \| v - x_0 \|_{C(J_T,E_\mu)} \notag 
				&\le \| v - u_x^\star \|_{C(J_T,E_\mu)} + 
					\| u_x^\star - u_{x_0}^\star \|_{C(J_T,E_\mu)} + 
					\| u_{x_0}^\star - x_0 \|_{C(J_T,E_\mu)} \notag\\
				&\le M_1 r + C_1 |x - x_0 |_{E_\mu} + 
					\| u_{x_0}^\star - x_0 \|_{C(J_T,E_\mu)} \\
				&\le M_1 r + C_1 \varepsilon + \varepsilon_0 /3. \notag
		\end{align}
		Claim 1 thus follows by restricting $r$
		and $\varepsilon$ appropriately so that the last line  in \eqref{Claim1Control}
		is bounded by $\varepsilon_0$.
\end{subproof}

\medskip

Henceforth, we assume $x$ is sufficiently close to $x_0$ (in $E_{\mu}$) and 
$r, T, \varepsilon$ are given appropriately small so that Claim 1 holds.
It follows that given any $v_1, v_2 \in W_x(J_T,r),$ the 
structural conditions \eqref{Structure1}--\eqref{Structure2} hold for 
$v_1(s), v_2(t)$, with $s,t \in \dot J_T$.\\


\noindent\underline{\bf Claim 2:} 
$(A(x_0)-A(v))v,\; F_1(v),\; F_2(v) \in \mathbb{E}_{0,\mu}(J_T)$
for each $v \in W_x(J_T,r)$.

\begin{subproof}[Proof of Claim 2]
Regarding regularity of $(A(x_0) - A(v))v$ and
$F_1(v)$, for $v \in W_x(J_T,r)$, 
note that continuity of each function 
into $E_0$ follows from \eqref{Structure1} and the fact that 
$v \in \E_{1,\mu}(J_T) \hookrightarrow C(J_T,E_\mu)$. 
Employing {\bf (H1)} and the bounds 
\eqref{Structure1} and \eqref{Claim1Control}, we compute
\begin{equation}\label{ABound}
\begin{aligned}
	t^{1-\mu}|(A(x_0) - &A(v(t))) v(t)|_{E_0} \\
		&\le L |x_0 - v(t)|_{E_{\mu}}  t^{1-\mu} |v(t)|_{E_1} \\
		&\le L \| v - x_0 \|_{C(J_T,E_\mu)} \| v \|_{\mathbb{E}_{1,\mu}(J_T)} \\
		&\le L\big(M_1 r +C_1\varepsilon +\|u^\star_{x_0}-x_0\|_{C(J_T,E_\mu)}\big)
			\big(r+\|u^\star_{x_0}\|_{\E_{1,\mu}(J_T)}\big)
\end{aligned}
\end{equation}
and
\begin{align}\label{F1Bound}
	t^{1-\mu} |F_1(v(t))|_{E_0} 
		&\le t^{1-\mu} |F_1(v(t)) - F_1(x_0)|_{E_0} + t^{1-\mu} | F_1(x_0) |_{E_0} \notag\\
		&\le t^{1-\mu} L | v(t) - x_0 |_{E_{\mu}} + t^{1-\mu} | F_1(x_0)|_{E_0}\\
		&\le T^{1-\mu}\big(L \varepsilon_0 + | F_1(x_0)|_{E_0}\big). \notag
\end{align}
From  \eqref{ABound} and \eqref{F1Bound}, we draw the following conclusions.
For each $v \in W_x(J_T,r)$, we see that $t^{1-\mu} |F_1(v(t))|_{E_0}$ and 
$t^{1-\mu}|(A(x_0) - A(v(t)))v(t)|_{E_0}$ are bounded on $\dot{J}_T$. 
Further, as $T \to 0^+$ notice that $\| v \|_{E_{1,\mu}(J_T)} \to 0$ 
and $T^{1-\mu} \to 0,$ from which we conclude 
\begin{equation}\label{F1AMapping}
	(A(x_0)-A(v))v, \;F_1(v) \in \E_{0,\mu}(J_T), \quad \text{for all $v \in W_x(J_T,r)$}. 
\end{equation}
As an additional observation, note \eqref{ABound} and \eqref{F1Bound} imply
$\| (A(x_0) - A(v))v \|_{\E_{0,\mu}(J_T)}$
and $\| F_1(v) \|_{\E_{0,\mu}(J_T)}$ are uniformly bounded on $W_x(J_T,r).$\\

Lastly, we consider the term $F_2(v)$, and we first observe that 
\begin{equation}
\label{F2-continuity}
	F_2(v)\in C(\dot J_T,E_0)\;\; \text{for each}\; \;  v\in W_x(J_T,r).
\end{equation} 
Indeed, let $ v\in W_x(J_T,r)$ be given.
Then $v\in C(\dot J_T,E_\beta)$,  and by Claim 1, we also know that 
$v(t)\in \bar B_{E_\mu}(x_0,\varepsilon_0)$ for all $t\in J_T$.
Hence we have for each $s,t\in \dot J_T$
\begin{equation*}
	|F_2(v(t))-F_2(v(s))|_{E_0}\le C_{\varepsilon_0}\sum_{j=1}^m 
		\big(1+ |v(t)|^{\rho_j}_{E_\beta} +  |v(s)|^{\rho_j}_{E_\beta}\big)|v(t)-v(s)|_{E_{\beta_j}}.
\end{equation*}
The assertion in \eqref{F2-continuity} now follows from the embedding 
$E_\beta\hookrightarrow E_{\beta_j}$
and the observation that $|v(t)|_{E_\beta}$ and $|v(s)|_{E_\beta}$ are 
bounded for values $s,t$ that are bounded away from $0$.
The latter statement means that for each $\eta\in \dot J_T$ there is a constant $C_\eta>0$ such that 
$|v(t)|_{E_\beta}, |v(s)|_{E_\beta}\le C_\eta$ for all $s,t\in  [\eta,T]$.

In order to show boundedness of $t^{1-\mu}|F_2(v(t))|_{E_0}$ we 
choose $y\in E_1\cap B_{E_\mu}(x_0,\varepsilon_0)$, which is feasible by 
the density of the embedding $E_1\hookrightarrow E_\mu$, 
and  write 
\[
	F_2(v(t))= (F_2(v(t))-F_2(y)) + F_2(y),\quad t\in \dot J_T.
\]
Clearly, $F_2(y)\in \E_{0,\mu}(J_T)$. 
To treat the term $ F_2(v(t))-F_2(y),$ we first observe that
\begin{equation}
\label{v-y-1}
	\begin{aligned}
	|v(t)-y|_{E_{\beta_j}} 
		& \le c_j  t^{-(1-\mu)\alpha_j} |v(t)-y|^{1-\alpha_j}_{E_\mu} 
			(t^{1-\mu}|v(t)-y|_{E_1})^{\alpha_j} \\
		&  \le c_j  t^{-(1-\mu)\alpha_j} |v(t)-y|^{1-\alpha_j}_{E_\mu} 
			\|v-y\|^{\alpha_j}_{\E_{1,\mu}(J_T)} 
		\le \tilde c t^{-(1-\mu)\alpha_j}
	\end{aligned}
\end{equation}
for each $t\in \dot J_T$. 
Next, employing \eqref{CriticalInterp} and \eqref{v-y-1}, we have
\begin{equation*}
\begin{aligned}
	t^{1-\mu} &|F_2(v(t))-F_2(y)|_{E_0}\\
	&\le C_{\varepsilon_0} t^{1-\mu} \sum_{j=1}^m 
		(1+ |y|^{\rho_j}_{E_\beta} + |v(t)|^{\rho_j}_{E_\beta})|v(t)-y|_{E_{\beta_j}} \\
	&\le \tilde C \bigg[ \sum_{j=1}^m t^{(1-\mu)(1-\alpha_j)} (1+|y|^{\rho_j}_{E_\beta})
		+ \sum_{j \text{ critical}} 
		\|v\|_{C(J_T,E_\mu)}^{\rho_j(1-\alpha)}
		\|v\|_{\E_{1,\mu}(J_T)}^{\rho_j \alpha} \\
	&\qquad + \sum_{j \text{ subcritical}} 
		t^{(1-\mu)(1-\rho_j\alpha - \alpha_j)} 
		\|v\|_{C(J_T,E_\mu)}^{\rho_j(1-\alpha)}
		\|v\|_{\E_{1,\mu}(J_T)}^{\rho_j \alpha} 
	\bigg].
\end{aligned}
\end{equation*}
Since $\| v \|_{\E_{1,\mu}(J_T)} \to 0$ as $T \to 0^+$,
we see that $t^{1-\mu}F_2(v(t))$ is bounded in $E_0$
		and $\|F_2(v)\|_{\E_{0,\mu}(J_T)}$ converges to zero as $T \to 0^+.$ 
		Claim 2 is thus proved.
\end{subproof}

\medskip

With $T,r$ and $x$ chosen as above, we have now shown that the 
right hand side of equation \eqref{TauMap} is in $\E_{0,\mu}(J_T)$ for all $v \in W_x(J_T,r).$
Thus, since $A(x_0) \in \mathcal{M}_\mu(E_1,E_0)$, 
we conclude that $\T_x: W_x(J_T,r) \to \E_{1,\mu}(J_T)$ is well--defined. 
Now, to conclude the proof of the theorem, we must show that $\T_x$ is a 
contraction mapping on $W_x(J_T,r)$ for appropriately chosen $r, T,$ and $x$.\\

\medskip
\noindent\underline{\bf Claim 3:} For $r, T,$ and $\varepsilon$ chosen sufficiently small 
and positive, if $x \in \bar{B}_{E_{\mu}}(x_0, \varepsilon)$, then $\T_x$ maps 
$W_x(J_T,r)$ into itself.
\begin{subproof}[Proof of Claim 3]
	For $v \in W_x(J_T,r),$ notice that $\T_x(v) \in \E_{1,\mu}(J_T)$ and $\T_x(v)(0) = x,$
	by the property of maximal regularity and the definition of the mapping $\T_x$. 
	Thus, it suffices to show that 
	$\| \T_x(v) - u_{x_0}^\star \|_{\E_{1,\mu}(J_T)} \le r$ holds for all $v \in W_x(J_T, r)$,
	provided $r, T,$ and $\varepsilon$ are chosen sufficiently small.

	We begin with the observation	
	\begin{equation}\label{FirstSplit}
		\| \T_x(v) - u_{x_0}^\star \|_{\E_{1,\mu}(J_T)} \le
			\| \T_x(v) - u_{x}^\star \|_{\E_{1,\mu}(J_T)} + 
			\| u_x^\star - u_{x_0}^\star \|_{\E_{1,\mu}(J_T)}.
	\end{equation}
	Applying \eqref{ICStars} we get
	\[
		\| u_x^\star - u_{x_0}^\star \|_{\E_{1,\mu}(J_T)}
			\le C_1 |x - x_0|_{E_\mu},
	\]
	using the fact that $M_1\ge 1$.
	Note that choosing $\varepsilon$ sufficiently small, 
	this term can be bounded by $r / 4$ for all $x \in \bar{B}_{E_\mu}(x_0, \varepsilon)$.
	
	Since $(\T_x(v) - u_x^\star)\big|_{t=0} = 0$, we apply
	maximal regularity of $A(x_0)$ to bound the first term of \eqref{FirstSplit}
	\begin{equation}\label{MaxRegSplit}
	\begin{aligned}
		&\| \T_x(v) - u_{x}^\star \|_{\E_{1,\mu}(J_T)} \\ 
		&\quad \le C_{T_1} \big(
			  \| (A(x_0) - A(v))v \|_{\E_{0,\mu}(J_T)} + \| F_1(v) \|_{\E_{0,\mu}(J_T)} 
			+ \| F_2(v) \|_{\E_{0,\mu}(J_T)} \big), 
	\end{aligned}
	\end{equation}
	where $C_{T_1} > 0$ is the constant of maximal regularity for the interval $[0,T_1]$; 
	recalling that $T_1 > 0$ was introduced before bounds \eqref{StrongCont}--\eqref{ICStars}. 
	The first two terms of \eqref{MaxRegSplit} are bounded as in \eqref{ABound} and \eqref{F1Bound},
	respectively --- which are both bounded by $r/4C_{T_1}$ for $r, T,$ and $\varepsilon$
	sufficiently small.
	
	Addressing the last term in \eqref{MaxRegSplit}, we first split
	\[
		\|F_2(v) \|_{\E_{0,\mu}(J_T)} \le 
			\| F_2(v) - F_2(u_{x_0}^\star) \|_{\E_{0,\mu}(J_T)} + 
			\| F_2(u_{x_0}^\star) \|_{\E_{0,\mu}(J_T)},
	\]
	then note that $\| F_2(u_{x_0}^\star) \|_{\E_{0,\mu}(J_T)}$ can be made arbitrarily 
	small by taking $T$ sufficiently small. Meanwhile, we apply
	\eqref{Structure2} and Remark~\ref{InterpConstants} to bound 
\begin{equation}\label{F2vMinusux0Star}
\begin{aligned}
		\|&F_2(v) - F_2(u_{x_0}^\star)|_{\E_{0,\mu}(J_T)} \\
			&\le M_0\sum_{j=1}^m 
				\bigg[ T^{(1-\mu)(1-\alpha_j)} + T^{(1-\mu)(1-\rho_j\alpha - \alpha_j)}
				\Big( \|v \|_{C(J_T,E_\mu)}^{\rho_j(1-\alpha)}
				\|v \|_{\E_{1,\mu}(J_T)}^{\rho_j\alpha} \\
			& \qquad +\|u_{x_0}^\star \|_{C(J_T, E_\mu)}^{\rho_j(1-\alpha)}
				\|u_{x_0}^\star \|_{\E_{1,\mu}(J_T)}^{\rho_j\alpha} \Big)\bigg] \| v - u_{x_0}^\star \|_{C(J_T, E_\mu)}^{1-\alpha_j} 
				\| v - u_{x_0}^\star \|_{\E_{1,\mu}(J_T)}^{\alpha_j}\\
			&\le M_0 \sum_{j=1}^m \bigg[ T^{(1-\mu)(1-\alpha_j)} + 
				T^{(1-\mu)(1-\rho_j\alpha - \alpha_j)}
				\big(2M_3 r^{\rho_j \alpha}\big)  \bigg] M_2 r,
	\end{aligned}
	\end{equation}
	where $M_0:=C_{\varepsilon_0} C_0$
	and $M_2, M_3$ are constants chosen as follows. Applying part of
	\eqref{Claim1Control} and Young's inequality, we select $M_2 > 0$ so that 
	\begin{equation}\label{YoungsIneq}
	\begin{aligned}
		\| v - &u_{x_0}^\star \|_{C(J_T, E_\mu)}^{1-\alpha_j} 
			\| v - u_{x_0}^\star \|_{\E_{1,\mu}(J_T)}^{\alpha_j} \\
		&\le (1-\alpha_j)\| v -u_{x_0}^\star \|_{C(J_T,E_\mu)} + 
			\alpha_j \| v - u_{x_0}^\star \|_{\E_{1,\mu}(J_T)} \\
		&\le (1 - \alpha_j)(M_1 r + C_1 \varepsilon) + \alpha_j r \\
		&\le M_2 r,
	\end{aligned}
	\end{equation}
	for $\varepsilon \le r$.
	Likewise, applying Claim 1, we select $M_3 > 0$ so that
	\begin{equation}\label{LastIneq}
	\begin{aligned}
		\|v \|&_{C(J_T,E_\mu)}^{\rho_j(1-\alpha)}
			\|v \|_{\E_{1,\mu}(J_T)}^{\rho_j\alpha} \\
		&\le \big(\varepsilon_0 + |x_0|_{E_\mu} \big)^{\rho_j (1-\alpha)} 
			\big(r + \|u_{x_0}^\star \|_{\E_{1,\mu}(J_T)}\big)^{\rho_j \alpha} \\
		&\le M_3 r^{\rho_j \alpha},
	\end{aligned}
	\end{equation}
	for all $T > 0$  sufficiently small so that $\| u_{x_0}^\star \|_{\E_{1,\mu}(J_T)} \le r$. 
	
	Finally, note that all terms in \eqref{F2vMinusux0Star} have a linear factor of $r$ and an additional
	factor that can be made arbitrarily small by restricting the sizes of $r, T,$ and $\varepsilon$.
	In context of Remark~\ref{InterpConstants}(b), we note that terms involving subcritical
	index $j$ are made small with $T$ alone, while critical indices $j$ require restriction on
	the size of $r$.
	We conclude that the last term in \eqref{MaxRegSplit} can be bounded by
	$r / 4$ and \eqref{FirstSplit} can thus be bounded by $r$ for all $r, T,$ and 
	$\varepsilon$ chosen sufficiently small. This proves Claim~3.
\end{subproof}

\medskip
\underline{\bf Claim 4:} There exist constants $\kappa = \kappa(r,T,\varepsilon) > 0$ and
$\sigma = \sigma(r,T,\varepsilon) > 0$ so that for all $x_1, x_2 \in \bar{B}_{E_\mu}(x_0, \varepsilon)$ and 
$v_i \in W_{x_i}(J_T,r)$, $i = 1,2$, it holds that 
\[
	\| \T_{x_1}(v_1) - \T_{x_2}(v_2) \|_{\E_{1,\mu}(J_T)} \le 
		\kappa \| v_1 - v_2 \|_{\E_{1,\mu}(J_T)} + \sigma | x_1 - x_2 |_{E_\mu}.
\] 
Further, $\kappa$ is made arbitrarily small by choosing $r, T,$ and $\varepsilon$
sufficiently small.\\
\begin{subproof}[Proof of Claim 4]
	Let $x_1, x_2 \in \bar{B}_{E_\mu}(x_0, \varepsilon)$ be given and pick 
	$v_1 \in W_{x_1}(J_T,r)$, $v_2 \in W_{x_2}(J_T,r)$. By \eqref{TraceZero} and \eqref{ICStars}
	we have
	\begin{align}\label{Claim4Central}
		\| v_1 - v_2 \|_{C(J_T,E_\mu)} &\le 
			\| (v_1 - v_2) - (u_{x_1}^\star - u_{x_2}^\star) \|_{C(J_T,E_\mu)} + 
			\|u_{x_1}^\star - u_{x_2}^\star \|_{C(J_T,E_\mu)} \notag\\
		&\le M_1 \| v_1 - v_2 \|_{\E_{1,\mu}(J_T)}
			+ C_1 |x_1 - x_2|_{E_\mu}.
	\end{align}
	
	Proceeding, we first note that 
	$\big((\T_{x_1}(v_1) - \T_{x_2}(v_2)) - (u_{x_1}^\star - u_{x_2}^\star)\big)\big|_{t = 0} = 0,$
	so we compute, applying maximal regularity of $A(x_0)$ and \eqref{ICStars},
	\begin{align}\label{Claim4Split}
		\|\T_{x_1} &(v_1) - \T_{x_2} (v_2)\|_{\E_{1,\mu}(J_T)} \notag\\
			&\le \|(\T_{x_1}(v_1) - \T_{x_2}(v_2) ) - 
				(u_{x_1}^\star - u_{x_2}^\star)\|_{\E_{1,\mu}(J_T)}
				+ \| u_{x_1}^\star - u_{x_2}^\star \|_{\E_{1,\mu}(J_T)} \notag\\
			&\le C_{T_1} \Big[ \| (A(v_2) - A(v_1)) v_2 \|_{\E_{0,\mu}(J_T)} +	\| (A(x_0) - A(v_1))(v_1 - v_2) \|_{\E_{0,\mu}(J_T)}\notag\\
			&\quad + \| F_1(v_1) - F_1(v_2) \|_{\E_{0,\mu}(J_T)} +
				\|F_2(v_1) - F_2(v_2)\|_{\E_{0,\mu}(J_T)} \Big] \\
			&\quad +	C_1 |x_1 - x_2|_{E_\mu}. \notag
	\end{align}
	Continuing with individual terms in \eqref{Claim4Split},
	we apply \eqref{Structure1} and \eqref{Claim4Central} to get
	\begin{align*}
				\| (A(v_2) &- A(v_1)) v_2 \|_{\E_{0,\mu}(J_T)} \\
			&\le L \|v_1 - v_2\|_{C(J_T,E_\mu)} \|v_2\|_{\E_{1,\mu}(J_T)} \\
			&\le L \Big(M_1 \| v_1 - v_2 \|_{\E_{1,\mu}(J_T)}
			+ C_1 |x_1 - x_2|_{E_\mu}\Big) \big(r + \|u_{x_0}^\star\|_{\E_{1,\mu}(J_T)}\big),\\
			\| F_1(v_1)& - F_1(v_2) \|_{\E_{0,\mu}(J_T)} \\
			& \le T^{1-\mu}L\Big(M_1 \| v_1 - v_2 \|_{\E_{1,\mu}(J_T)}
			+ C_1 |x_1 - x_2|_{E_\mu}\Big), 
      \end{align*}
	and, also applying \eqref{Claim1Control}, we have
	\begin{align*}
		\|(A(x_0) &- A(v_1))(v_1 - v_2)\|_{E_{0,\mu}(J_T)}\\
			&\le L \|v_1 - x_0\|_{C(J_T,E_\mu)} \|v_1 - v_2\|_{\E_{1,\mu}(J_T)}\\
			&\le L\Big(M_1 r + C_1 \varepsilon + \|u_{x_0}^\star - x_0 \|_{C(J_T, E_\mu)}\Big)
				\|v_1 - v_2\|_{\E_{1,\mu}(J_T)}.
	\end{align*}
	
	Meanwhile, by Young's inequality and \eqref{Claim4Central}, recalling 
	that $M_1 \ge 1$,	we have
	\begin{align*}\label{YoungsPlus}
		\|v_1 - &v_2\|_{C(J_T,E_\mu)}^{1-\alpha_j} 
				\|v_1 - v_2\|_{\E_{1,\mu}(J_T)}^{\alpha_j} \notag\\
			&\le M_1 \|v_1 - v_2 \|_{\E_{1,\mu}(J_T)} + C_1 |x_1 - x_2|_{E_\mu},
	\end{align*}
	which we apply in combination with \eqref{Structure2}, Remark~\ref{InterpConstants}, and 
	\eqref{LastIneq} to bound
	\begin{align*}
		\|&F_2(v_1) - F_2(v_2) \|_{\E_{0,\mu}(J_T)} \\
			&\le  M_0 \sum_{j=1}^m 
				\bigg[ T^{(1-\mu)(1-\alpha_j)}
				+ T^{(1-\mu)(1-\rho_j\alpha - \alpha_j)}
				\Big( \|v_1 \|_{C(J_T,E_\mu)}^{\rho_j(1-\alpha)}
				\|v_1 \|_{\E_{1,\mu}(J_T)}^{\rho_j\alpha} \notag\\
			& \quad +\|v_2 \|_{C(J_T, E_\mu)}^{\rho_j(1-\alpha)}
				\|v_2 \|_{\E_{1,\mu}(J_T)}^{\rho_j\alpha} \Big)\bigg]
				\| v_1 - v_2 \|_{C(J_T, E_\mu)}^{1-\alpha_j} 
				\| v_1 - v_2 \|_{\E_{1,\mu}(J_T)}^{\alpha_j} \notag\\
			&\le M_0 \sum_{j=1}^m 
				\bigg[ T^{(1-\mu)(1-\alpha_j)}
				+ T^{(1-\mu)(1-\rho_j\alpha - \alpha_j)}
				\big( 2 M_3 r^{\rho_j\alpha} \big)\bigg] \notag\\
			& \quad \cdot \big( M_1 \|v_1 - v_2 \|_{\E_{1,\mu}(J_T)} + 
				 C_1 |x_1 - x_2|_{E_\mu} \big), \notag\\
	\end{align*}
	with  $M_0:=C_{\varepsilon_0} C_0$.
	Combining all terms involving $\| v_1 - v_2 \|_{\E_{1,\mu}(J_T)}$ and likewise
	terms involving $|x_1 - x_2|_{E_\mu}$, note that \eqref{Claim4Split} 
	takes on the desired structure for the claim. 
	Moreover, every factor multiplying the terms $\| v_1 - v_2 \|_{\E_{1,\mu}(J_T)}$
	can be made arbitrarily small by taking either $r, T,$ or $\varepsilon$ sufficiently small.
	Note that the same cannot be said for every factor of $|x_1 - x_2|_{E_\mu}$, 
	as seen in the last term of \eqref{Claim4Split}. 
	Regardless, we have thus proved Claim 4.
\end{subproof}

Finally, fix $r, T,$ and $\varepsilon$ small enough so that $\kappa \le \frac{1}{2}$.
We thus have the estimate
\begin{equation}\label{3/4}
\| \T_{x_1}(v_1) - \T_{x_2}(v_2) \|_{\E_{1,\mu}(J_T)} \le \frac{1}{2} \| v_1 - v_2 \|_{\E_{0,\mu}(J_T)} + \sigma |x_1-x_2|_{E_\mu},
\end{equation}
for every $ v_i\in W_{x_i}(J_T, r)$ and $ x_i \in \bar B_{E_\mu}(x_0,\varepsilon)$.
Let $x_1=x_2=x\in B_{E_\mu}(x_0,\varepsilon)$ be given. Then 
\[
	\| \T_x(v_1) - \T_x(v_2) \|_{\E_{1,\mu}(J_T)} \le \frac{1}{2} \| v_1 - v_2 \|_{\E_{0,\mu}(J_T)}
	\qquad \text{for $v_1, v_2 \in W_x(J_T,r)$}, 
\]
and so $\T_x$ is a strict contraction on $W_x(J_T,r)$. Applying Banach's fixed point theorem,
we obtain a unique fixed point
\[
	u(\cdot, x) \in W_x(J_T,r) \subset \E_{1,\mu}(J_T), \qquad 
		\text{for every $x \in \bar{B}_{E_\mu}(x_0, \varepsilon),$}
\]
which solves \eqref{QuasiEqn} by construction of the mapping $\T_x$.
Furthermore, for $x_1, x_2 \in \bar{B}_{E_\mu}(x_0, \varepsilon)$, \eqref{3/4} implies
\[
	\|u(\cdot, x_1) - u(\cdot, x_2) \|_{\E_{1,\mu}(J_T)} \le 2\sigma |x_1 - x_2|_{E_\mu},
\] 
which completes the proof of the first statement of the theorem.

\medskip
(b) By a standard argument, we can extend the local solution
obtained in part (a) to a maximal solution on some right--open interval $[0, t^+(x_0)).$ 
To confirm this maximal solution satisfies the stated regularity,
we consider a portion of this extension argument. In particular, with $x_0 \in V_\mu$
given, we apply part (a) to produce the solution $u_1(\cdot, x_0) \in \E_{1, \mu}([0,\tau_1])$ 
on some interval $[0,\tau_1].$ Then, we note that $x_1 := u_1(\tau_1, x_0) \in E_1 \cap V_\mu$, 
and so we may apply part (a) again to produce the solution 
$u_2(\cdot, x_1) \in \E_{1,\mu}([0,\tau_2])$ on a second interval
$[0,\tau_2].$ It follows that
\[
	u(t) := \begin{cases}
			u_1(t), &\text{for $t \in [0,\tau_1]$}\\
			u_2(t-\tau_1), &\text{for $t \in [\tau_1, \tau_1+\tau_2]$}
		\end{cases}
\]
satisfies \eqref{QuasiEqn} with $u(0) = x_0$ and regularity $u \in \E_{1,\mu}([0,\tau_1 + \tau_2]).$
To prove this last claim, it suffices to show that $u_2 \in C([0,\tau_2], E_1),$ in particular
$\lim_{t \to 0^+} u_2(t) = x_1.$ For that purpose, we fix $\varepsilon > 0$ so that the result
of part (a) holds for $x \in \bar B_{E_\mu} (x_1, \varepsilon)$ and choose $\delta \in (0, \tau_2)$ 
sufficiently small that $u_1(\tau_1 - \delta) \in \bar B_{E_\mu} (x_1, \varepsilon).$ Now let
$v \in \E_{1,\mu}([0,\tau_2])$ denote the solution to \eqref{QuasiEqn} with initial value
$u_1(\tau_1 - \delta).$ By uniqueness of solutions, it follows that 
\[
	v\big|_{[0,\delta]} = u_1\big|_{[\tau_1 - \delta, \tau_1]} 
	\qquad \text{and}	\qquad
	v\big|_{[\delta,\tau_2]} = u_2\big|_{[0,\tau_2-\delta]},
\]
and the desired regularity of $u_2$ now follows by the regularity of $v \in \E_{1,\mu}([0,\tau_2])$
away from $t =0$.

\medskip
(c) The proof of global existence follows exactly as in \cite[Theorem 4.1 (c)--(d)]{CS01},
with the regularity of maximal solutions confirmed in (b) above.
\end{proof}
\goodbreak
We conclude this section on well--posedness with the following
extension of \eqref{Lip-mu}, accounting for the dependence of solutions on 
initial data residing in smaller spaces $E_{\bar \mu} \subset E_\mu$.
This result will be useful in the following section as we consider long--term 
dynamics of solutions that start in $E_\mu$ and instantaneously regularize
to spaces $E_{\bar \mu}$.

\begin{prop}\label{existence-bar}
Suppose the assumptions of Theorem~\ref{thm:MainResult} hold, 
$\bar \mu \in [\beta,1)$, and $x_0\in V_\mu\cap E_{\bar\mu}$.
Then Theorem~\ref{thm:MainResult}(a) holds true with $\mu$ replaced by $\bar\mu$. In particular,
\begin{equation}\label{Lip-bar-mu}
\| u(\cdot, x_1) - u(\cdot, x_2) \|_{\E_{1, \bar\mu}([0,\tau])}  \le \sigma | x_1 - x_2 |_{E_{\bar\mu}},
			\quad  x_1, x_2 \in \bar{B}_{E_{\bar\mu}}(x_0, \varepsilon).
\end{equation}			
\end{prop}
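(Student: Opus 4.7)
The plan is to reduce the claim to a direct application of Theorem~\ref{thm:MainResult}(a) by verifying that hypotheses {\bf (H1)} and {\bf (H2)} continue to hold when the weight $\mu$ is replaced by $\bar\mu$. Since $\bar\mu\in [\beta,1)$, we have the chain of continuous embeddings $E_{\bar\mu} \hookrightarrow E_\beta \hookrightarrow E_{\beta_j} \hookrightarrow E_\mu$ for each $j=1,\dots,m$. We set $V_{\bar\mu} := V_\mu\cap E_{\bar\mu}$, which is an open subset of $E_{\bar\mu}$ by continuity of the embedding $E_{\bar\mu}\hookrightarrow E_\mu$, and note that $x_0\in V_{\bar\mu}$ by hypothesis.

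For {\bf (H1)} in the $\bar\mu$ setting, the local Lipschitz continuity of $(A,F_1): V_\mu \to \mathcal{L}(E_1,E_0)\times E_0$ transfers immediately to $(A,F_1): V_{\bar\mu}\to \mathcal{L}(E_1,E_0)\times E_0$ via the embedding $E_{\bar\mu}\hookrightarrow E_\mu$. The only nontrivial point is that we need $A(x)\in \mathcal{M}_{\bar\mu}(E_1,E_0)$ for $x\in V_{\bar\mu}$; however, a standard fact in the theory of continuous maximal regularity in time--weighted spaces (see, e.g., \cite{CS01}) is that the class $\mathcal{M}_\mu(E_1,E_0)$ is independent of the weight $\mu\in (0,1)$, so $\mathcal{M}_\mu(E_1,E_0)=\mathcal{M}_{\bar\mu}(E_1,E_0)$ as sets.

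For {\bf (H2)} in the $\bar\mu$ setting, the key observation is that $\bar\mu\ge \beta$ makes the structural condition essentially collapse. Fix any $\beta'\in (\bar\mu,1)$; then $E_{\beta'}\hookrightarrow E_\beta$ ensures $F_2: V_\mu\cap E_{\beta'} \to E_0$ is well defined. Given $x_1,x_2\in \bar B_{E_{\bar\mu}}(x_0,R)\cap V_\mu\cap E_{\beta'}$, the embedding places them in some $\bar B_{E_\mu}(x_0,R')$, so the original structural estimate \eqref{Structural} applies. Using that $|x_i|_{E_\beta}$ is uniformly bounded on $\bar B_{E_{\bar\mu}}(x_0,R)$ and that $|x_1-x_2|_{E_{\beta_j}}\le C|x_1-x_2|_{E_{\bar\mu}}$, one obtains
\[
	|F_2(x_1)-F_2(x_2)|_{E_0} \le \tilde C_R \, |x_1-x_2|_{E_{\bar\mu}},
\]
which conforms to {\bf (H2)} in the $\bar\mu$ setting with $m'=1$, $\beta'_1 = \bar\mu$, and $\rho'_1=0$. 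The criticality condition \eqref{Criticality} is then trivially satisfied (in fact strictly, so the index is subcritical).

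Having verified the hypotheses, Theorem~\ref{thm:MainResult}(a) applied with $\mu$ replaced by $\bar\mu$ produces constants $\tau,\varepsilon,\sigma>0$ and a unique solution $u(\cdot,x)\in \E_{1,\bar\mu}([0,\tau])$ for each $x\in \bar B_{E_{\bar\mu}}(x_0,\varepsilon)$, together with the Lipschitz estimate \eqref{Lip-bar-mu}. Uniqueness in the $\bar\mu$ setting is consistent with uniqueness in the $\mu$ setting because $\E_{1,\bar\mu}([0,\tau])\hookrightarrow \E_{1,\mu}([0,\tau])$ and the solution furnished by Theorem~\ref{thm:MainResult}(a) is unique in the larger class. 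The main technical point throughout is the weight--independence of $\mathcal{M}_\mu$; once that is in hand, the verification of {\bf (H2)} is routine, since working in $E_{\bar\mu}$ with $\bar\mu\ge\beta$ effectively semilinearizes the problem.
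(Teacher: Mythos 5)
Your proof is correct and takes essentially the same approach as the paper: both reduce to the observation that $F_2$ is locally Lipschitz on $V_\mu\cap E_{\bar\mu}$ once $\bar\mu\ge\beta$ (the paper absorbs $F_2$ into $F_1$; you record it as a trivially subcritical instance of {\bf (H2)} with $\rho'_1=0$), combine this with $\mathcal M_\mu(E_1,E_0)\subset\mathcal M_{\bar\mu}(E_1,E_0)$ from \cite[Lemma~2.6]{CS01} (your claim of set equality is not needed, only this one--directional inclusion), and apply Theorem~\ref{thm:MainResult}(a) in the $\bar\mu$-weighted scale. The one minor difference is how the $\mu$- and $\bar\mu$-solutions are identified: you invoke the embedding $\E_{1,\bar\mu}([0,\tau])\hookrightarrow\E_{1,\mu}([0,\tau])$ together with uniqueness in $\E_{1,\mu}([0,\tau])$, whereas the paper runs the Banach fixed-point argument on the intersection $\mathsf M_\mu\cap\mathsf M_{\bar\mu}$ of the two contraction domains, which sidesteps the (standard but not explicitly re-derived) extension of uniqueness beyond the ball $W_x(J_T,r)$.
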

\begin{proof} 
We recall that $ \mathcal M_\mu(E_1,E_0)\subset \mathcal M_{\bar\mu}(E_1,E_0),$
see~\cite[Lemma 2.6]{CS01}.
Hence it follows from  {\bf (H1)}-{\bf (H2)} that
\begin{equation}
(A,F)\in C^{1-}\big (V_\mu\cap E_{\bar\mu},\;\mathcal{M}_{\bar\mu}(E_1, E_0) \times E_0 \big),
\quad \text{where}\;\; F:=F_1+F_2.
\end{equation}
Existence of a unique solution $u_{\bar\mu}=u_{\bar\mu}(\cdot,x)\in \E_{1,\bar\mu}([0,\tau])$
with  property~\eqref{Lip-bar-mu}
follows as in the proof of Theorem~\ref{thm:MainResult}(a), with $F_1=F$ and $F_2=0$.
In both cases, the solution is obtained as a fixed point of a strict contraction
$\T_x: {\sf M}_\nu\to {\sf M}_\nu$,  
where ${\sf M}_\nu$ is a closed subset of $\E_{1,\nu}([0,\tau])$, 
respectively,  with $\nu\in\{\mu,\bar\mu\}$.
But 
$$\T_x: {\sf M}_\mu \cap {\sf M}_{\bar\mu}\to {\sf M}_\mu \cap {\sf M}_{\bar\mu}$$ 
is a strict contraction as well, and thus has a unique fixed point
$u_\star\in {\sf M}_\mu \cap {\sf  M}_{\bar\mu}$. 
Therefore, $u_\mu=u_{\bar\mu}=u_\star$ on $[0,\tau]$,
where $u_\mu=u_\mu(\cdot,x)\in\E_{1,\mu}([0,\tau])$ is 
the solution obtained in Theorem~\ref{thm:MainResult}(a).
This shows, in particular, that each solution $u(\cdot,x)$, with 
$x\in B_{E_{\bar\mu}}(x_0,\varepsilon)$, obtained in Theorem~\ref{thm:MainResult}(a), 
also belongs to $\E_{1,\bar\mu}([0,\tau])$.
\end{proof}

\section{Normal Stability}\label{sec:Stability}
With well--posedness of \eqref{QuasiEqn} established, 
we investigate the long--term behavior of solutions that start near equilibria. 
In particular, in this section we demonstrate that the so--called
generalized principle of linearized stability (c.f. \cite{PSZ09b, PSZ09a}) 
continues to hold on $E_\mu$, provided the pertinent assumptions are satisfied.
As a first step in this direction, we prove that stability of 
equilibria can be tracked in either the topology of $E_\mu$ or, equivalently, 
in the stronger topology of $E_{\bar \mu}.$

\begin{prop}\label{pro:stability}
Suppose the assumptions of Theorem~\ref{thm:MainResult} hold, $\bar \mu \in [\beta,1)$, and suppose
$u_*\in V_\mu\cap E_1$ is an equilibrium for~\eqref{QuasiEqn}. Then \\
 $u_*$ is stable in the topology of $E_\mu$ $\Longleftrightarrow$
 $u_*$ is stable in the topology of $E_{\bar\mu}$.
\end{prop}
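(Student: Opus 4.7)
The plan is to exploit two facts provided by the paper's local theory: (i) by Theorem~\ref{thm:MainResult}(a), the semiflow started in $E_\mu$ regularizes instantaneously, so at any $t = \tau > 0$ the solution lies in $E_1$ with
\[
	|u(\tau,x_0)-u_*|_{E_1} \le \tau^{-(1-\mu)}\sigma\,|x_0-u_*|_{E_\mu};
\]
and (ii) by Proposition~\ref{existence-bar}, the same semiflow enjoys a Lipschitz bound on a short interval when measured in the stronger $E_{\bar\mu}$ norm. The continuous embeddings $E_1 \hookrightarrow E_{\bar\mu} \hookrightarrow E_\mu$ then let one transfer smallness between the two norms after a waiting time $\tau$, while the semiflow property takes care of the tail.

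For ``$E_{\bar\mu}$-stable $\Rightarrow$ $E_\mu$-stable'', fix $\varepsilon > 0$ and let $\delta_{\bar\mu}>0$ be the tolerance coming from $E_{\bar\mu}$-stability matched to $\varepsilon$. Apply Theorem~\ref{thm:MainResult}(a) centered at $u_*$ to obtain constants $\tau,\varepsilon_0,\sigma$ so that the Lipschitz estimate above holds uniformly on $\bar B_{E_\mu}(u_*,\varepsilon_0)$. Choose $\delta_\mu \in (0,\varepsilon_0]$ small enough that the regularization estimate and the embedding $E_1 \hookrightarrow E_{\bar\mu}$ together yield $|u(\tau,x_0)-u_*|_{E_{\bar\mu}} < \delta_{\bar\mu}$ whenever $|x_0-u_*|_{E_\mu}<\delta_\mu$. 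For $t \ge \tau$, write $u(t,x_0)=u(t-\tau, u(\tau,x_0))$ and invoke $E_{\bar\mu}$-stability together with $E_{\bar\mu} \hookrightarrow E_\mu$ to bound $|u(t,x_0)-u_*|_{E_\mu}$ by a constant multiple of $\varepsilon$. For $t\in[0,\tau]$, Corollary~\ref{SemiflowRegularity} directly gives $\|u(\cdot,x_0)-u_*\|_{C([0,\tau],E_\mu)} \le c|x_0-u_*|_{E_\mu}$, which can be made less than $\varepsilon$ by a final reduction of $\delta_\mu$.

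The converse direction is structurally symmetric, with Proposition~\ref{existence-bar} in the role of Theorem~\ref{thm:MainResult}(a) for short-time control: given $\varepsilon>0$, exploit $E_\mu$-stability to choose $\delta_\mu$ (with intermediate target $\varepsilon_\mu$ to be fixed at the end) so that $E_\mu$-trajectories remain in $\bar B_{E_\mu}(u_*,\varepsilon_\mu) \subset \bar B_{E_\mu}(u_*,\varepsilon_0)$ for all time. For any $t\ge \tau$, applying Theorem~\ref{thm:MainResult}(a) at $u_*$ to the restarted initial value $u(t-\tau,x_0)$ yields an $E_1$-estimate of order $\varepsilon_\mu$ at time $\tau$, hence an $E_{\bar\mu}$-estimate for $u(t,x_0)-u_*$ via $E_1 \hookrightarrow E_{\bar\mu}$. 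For $t\in[0,\tau]$, the $E_{\bar\mu}$-Lipschitz estimate of Proposition~\ref{existence-bar} controls the trajectory directly in terms of $|x_0-u_*|_{E_{\bar\mu}}$. The principal obstacle is bookkeeping: one must ensure that a single triple $(\tau,\varepsilon_0,\sigma)$ works uniformly on a neighborhood of $u_*$ --- this uses that $u_* \in V_\mu \cap E_1 \subset V_\mu \cap E_{\bar\mu}$, so both Theorem~\ref{thm:MainResult}(a) and Proposition~\ref{existence-bar} can be centered at $u_*$ --- and then choose $\delta_\mu, \delta_{\bar\mu}$ so that the short-time and long-time estimates combine to land inside the prescribed $\varepsilon$-ball.
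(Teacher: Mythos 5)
Your argument is correct and follows essentially the same route as the paper: both proofs introduce a waiting time $\tau$, use the Lipschitz estimate of Theorem~\ref{thm:MainResult}(a) (equivalently Proposition~\ref{existence-bar}) to exploit instantaneous regularization, and then restart the semiflow to transfer smallness between the $E_\mu$ and $E_{\bar\mu}$ topologies. The only cosmetic difference is that you pass through the pointwise $E_1$--bound at $t=\tau$ and then embed $E_1\hookrightarrow E_{\bar\mu}$, whereas the paper packages the same idea as a $BC([\tau,2\tau],E_{\bar\mu})$--estimate (its inequality \eqref{AB}) and restarts at the discrete times $k\tau$; the content is identical.
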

\begin{proof} 
By Theorem~\ref{thm:MainResult}(a) and Proposition~\ref{existence-bar}
there are constants  $\tau=\tau(u_*)$, $\eta=\eta(u_*)$ and $c_1=c_1(u_*)$, corresponding to the
initial value $u_*$, such that
\begin{equation}\label{AA}
\|u(\cdot, y_0)-u_*\|_{\E_{1,\nu}([0,2\tau])}\le c_1 |y_0-u_*|_{E_\nu},
\quad \nu\in\{\mu,\bar\mu\},
\end{equation}
for any $y_0\in B_{E_\nu}(u_*,\eta)$.
Moreover,  one readily verifies that there is a constant $c_2=c_2(\tau,\mu,\bar\mu)$ such that
\begin{equation}\label{AB}
\| v - u_*\|_{BC([\tau,2\tau], E_{\bar\mu})} \le c_2 \| v -u_*\|_{\E_{1,\mu}([0,2\tau])} 
\end{equation}
for any function $v\in \E_{1,\mu}([0,2\tau])$.
 In the sequel, we denote the embedding constant of $E_{\bar\mu}\hookrightarrow E_{\mu}$
by $c_\mu$. Consequently,
\begin{equation}\label{AC}
B_{E_{\bar\mu}}(u_*, \alpha)\subset B_{E_{\mu}}(u_*, c_\mu\alpha).
\end{equation}
\medskip\noindent
Suppose $u_*$ is stable in $E_{\mu}$. 
Let $\varepsilon>0$ be given and set $\varepsilon_\mu:=\min\{\varepsilon/(c_1c_2),\eta\}.$
By assumption, there is a number $\delta_\mu$ 
such that every solution of~\eqref{QuasiEqn} with initial value $x_0\in B_{E_\mu}(u_*,\delta_\mu)$
exists globally and satisfies
\begin{equation}\label{AD}
 |u(t,x_0)-u_*|_{E_\mu}<\varepsilon_\mu, \quad\text{for all} \;\ t\ge 0.
 \end{equation}
Next, we choose $\delta \in (0,\delta_\mu/c_\mu]$ sufficiently small such that
\begin{equation}\label{AE}
|u(t,x_0)-u_*|_{E_{\bar\mu}}< \varepsilon,\quad
	\text{for all} \;\; t\in [0,\tau],\;\; x_0\in B_{E_{\bar\mu}}(u_*,\delta).
\end{equation}
Here we note that \eqref{AE} follows from continuous dependence on the initial data, see \eqref{AA}.
As a consequence of \eqref{AC}, every solution $u(\cdot, x_0)$ with 
$x_0\in B_{E_{\bar\mu}}(u_*,\delta)$ exists globally
and satisfies \eqref{AD} as well as \eqref{AE}.
Next we will show by induction that
 $u(t,x_0)\in B_{E_{\bar\mu}}(u_*,\varepsilon)$ for all $t\ge 0$.
 Suppose we have already shown that
 $|u(t,x_0)-u_*|_{E_{\bar\mu}}< \varepsilon$ for $t\in [0,(k+1)\tau]$ and $k\in\mathbb N.$
 We note that the case $k=0$ is exactly \eqref{AE}.
From the definition of $\varepsilon_\mu$ and \eqref{AA}-\eqref{AB} as well as \eqref{AD} follows
 \begin{equation}
	|u(k\tau +s,x_0)-u_*|_{E_{\bar\mu}}\le 
 		c_1c_2 |u(k\tau,x_0) -u_*|_{E_\mu}<\varepsilon,\quad \tau\le s\le 2\tau. 
 \end{equation}
 Since this step works for any $k\in\mathbb N$, we obtain stability of $u_*$ in $E_{\bar\mu}$.
 \medskip\\
 \noindent
Suppose that $u_*$ is stable in $E_{\bar\mu}$. Let $\varepsilon>0$ be given
 and set $\varepsilon_{\bar\mu}= \varepsilon/c_\mu$.  
 By the stability assumption, there exists a number $\delta_{\bar\mu}$ such that
every solution of \eqref{QuasiEqn} with initial value $x_0\in B_{E_{\bar\mu}}(u_*,\delta_{\bar\mu})$
exists globally and satisfies
\begin{equation}\label{AF}
 |u(t,x_0)-u_*|_{E_{\bar\mu}}<\varepsilon_{\bar\mu}, \quad\text{for all} \;\ t\ge 0.
 \end{equation}
Next, by continuous dependence on initial data, 
we can choose $\delta\in (0,\eta)$ sufficiently small such that
\begin{equation}\label{AG}
	|u(t,x_0)-u_*|_{E_\mu}< \delta_{\bar\mu}/(c_1c_2),\quad
		\text{for all} \;\; t\in [0,\tau],\;\; x_0\in B_{E_\mu}(u_*,\delta).
\end{equation}
It follows from \eqref{AA}-\eqref{AB} and \eqref{AG} that
$|u(\tau,x_0) - u_*|_{E_{\bar\mu}} \le c_1c_2 |x_0-u_*|_{E_\mu}<\delta_{\bar\mu}$,
for all  $x_0\in B_{E_{\bar\mu}}(u_*,\delta)$.
In particular, after a short time, we are simply tracking the solutions
$u(\cdot, x_0)$ in the stronger topology of $E_{\bar\mu}$.
Hence, by \eqref{AC} and \eqref{AF},
 $u(t,x_0)\in B_{E_\mu}(u_*,\varepsilon)$ for
any initial value $x_0\in B_{E_{\mu}}(u_*,\delta)$.
This completes the proof of Proposition~\ref{pro:stability}.
\end{proof}

In addition to {\bf (H1)}-{\bf (H2) }we now assume that 
\begin{equation}
\label{A-F1-F2}
(A,F_1,F_2)\in C^1(V_\mu\cap E_{\bar\mu} ,\;\mathcal L(E_1,E_0)\times E_0\times E_0),
\end{equation}
where  $\bar\mu\in [\beta,1)$ is a fixed number.
Here we note that $V_\mu\cap E_{\bar\mu}\subset E_{\bar\mu}$ is open,
and that differentiability is understood with respect to the topology of $E_{\bar\mu}$.
\smallskip\\ \noindent
Let $ \cE\subset V_\mu\cap E_1$ denote the set of equilibrium solutions 
of \eqref{QuasiEqn}, which means that
$$
u\in\cE \;\text{ if and only if }\; u\in V_\mu\cap E_1 \; \text{ and } \; A(u)u=F_1(u) +F_2(u).
$$
Given an  element $u_*\in\cE$,  we assume that $u_*$ is
contained in an $m$-dimensional manifold of equilibria. This means that there
is an open subset $U\subset\R^m$, $0\in U$, and a $C^1$-function
$\Psi:U\rightarrow E_1$,  such that
\begin{equation}
\label{manifold}
\begin{aligned}
& \bullet\
\text{$\Psi(U)\subset \cE$ and $\Psi(0)=u_*$,} \\
& \bullet\
 \text{the rank of $\Psi^\prime(0)$ equals $m$, and} \\
& \bullet\
\text{$A(\Psi(\zeta))\Psi(\zeta)=F(\Psi(\zeta)),\quad \zeta\in U.$}
\end{aligned}
\end{equation}
We assume furthermore that near $u_*$ there are no other equilibria
than those given by $\Psi(U)$,
i.e.\ $\cE\cap B_{E_1}(u_*,{r_1})=\Psi(U)$, for some $r_1>0$.
\smallskip\\ \noindent
For $u_*\in\cE$, we define
\begin{equation}\label{A0}
A_0v = A(u_*)v+(A^\prime(u_*)v)u_* - F_1^\prime(u_*)v - F_2^\prime(u_*)v, \quad v\in E_1,
\end{equation}
where $A^\prime, F_1^\prime$ and $F_2^\prime$ denote the 
Fr\'echet derivatives of the respective functions.
We denote by $N(A_0)$ and $R(A_0)$ the kernel and range, respectively, of the 
operator $A_0.$
\medskip\\
\noindent
After these preparations we can state the following result on 
convergence of  solutions starting near $u_*$.
\goodbreak
\begin{thm} 
\label{thm:normally-stable}
Suppose $u_*\in V_\mu\cap E_1$ is an
equilibrium of~\eqref{QuasiEqn}, and suppose that the functions
$(A,F_1,F_2)$ satisfy  {\bf(H1)}-{\bf (H2)} as well as \eqref{A-F1-F2}.
Finally, suppose that $u_*$ is normally stable, i.e.,
\begin{enumerate}
\item[(i)] near $u_*$ the set of equilibria $\cE$ is a $C^1$-manifold 
in $E_1$ of dimension $m\in \mathbb N$,
\item[(ii)] \, the tangent space for $\cE$ at $u_*$ is given by $N(A_0)$,
\item[(iii)] \, $0$ is a semi-simple eigenvalue of
$A_0$, i.e.,\ $ N(A_0)\oplus R(A_0)=E_0$,
\item[(iv)] \, $\sigma(-A_0)\setminus\{0\}\subset \{z\in \mathbb C:\, {\rm Re}\, z<0\}$.
\end{enumerate}
Then $u_*$ is stable in $E_\mu$. Moreover, there exists a constant $\delta=\delta(\bar\mu)>0$ such
that each solution $u(\cdot,x_0)$ of \eqref{QuasiEqn} with initial
value $x_0\in B_{E_\mu}(u_*,\delta)$ exists globally and converges 
to some $u_\infty\in\cE$ in  $E_{\bar\mu}$ at an exponential rate as $t\to\infty$.
\end{thm}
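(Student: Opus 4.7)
The plan is to adapt the generalized principle of linearized stability of Prüss–Simonett–Zacher (\cite{PSZ09a,PSZ09b}) to the continuous maximal regularity framework of this paper, and to use Proposition~\ref{pro:stability} to transfer conclusions between the $E_\mu$- and $E_{\bar\mu}$-topologies. Proposition~\ref{pro:stability} already promotes stability in $E_{\bar\mu}$ to stability in $E_\mu$; moreover, Theorem~\ref{thm:MainResult}(a) together with the embedding $\E_{1,\mu}([0,2\tau])\hookrightarrow BC([\tau,2\tau],E_{\bar\mu})$ employed in the proof of Proposition~\ref{pro:stability} shows that any $x_0\in B_{E_\mu}(u_*,\delta)$ produces, at time $\tau$, a point $u(\tau,x_0)\in B_{E_{\bar\mu}}(u_*,\delta_1)$ with $\delta_1\to 0$ as $\delta\to 0$. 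So it suffices to prove global existence, stability, and exponential convergence to an equilibrium in $E_{\bar\mu}$ for all initial data lying in a sufficiently small $E_{\bar\mu}$-ball around $u_*$.

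Next I set up a decomposition near $u_*$. Hypothesis (iii) furnishes the Riesz projection $P_0$ onto $N(A_0)$ along $R(A_0)$. Setting $Y_j^c:=\ker P_0\cap E_j$ for $j=0,1$ and $Y_{\bar\mu}^c:=\ker P_0\cap E_{\bar\mu}$, one has $E_j=N(A_0)\oplus Y_j^c$ and the part $A_1:=A_0|_{Y_1^c}\colon Y_1^c\to Y_0^c$ is an isomorphism whose spectrum lies in $\{\operatorname{Re} z>\omega\}$ for some $\omega>0$, by (iv). Hypotheses (i)–(ii) identify $\cE$ near $u_*$ as the graph of a $C^1$ map $\phi\colon N(A_0)\to Y_1^c$ with $\phi(0)=0$ and $\phi'(0)=0$. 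Writing $u=u_*+\eta+\phi(\eta)+w$ with $\eta\in N(A_0)$ and $w\in Y_1^c$, and applying $P_0$, respectively $I-P_0$, to \eqref{QuasiEqn}, one arrives (using $A(u_*+\eta+\phi(\eta))(u_*+\eta+\phi(\eta))=F(u_*+\eta+\phi(\eta))$) at a decoupled system
\begin{equation*}
\dot w+A_1w=G_1(\eta,w),\qquad \dot\eta=G_2(\eta,w),\qquad (w,\eta)(0)=(w_0,\eta_0),
\end{equation*}
where $G_1,G_2$ are $C^1$ by \eqref{A-F1-F2}, satisfy $G_i(0,0)=0$ and $\partial G_i(0,0)=0$, and whose Lipschitz constants on a ball of radius $\rho$ in $N(A_0)\times Y_{\bar\mu}^c$ therefore tend to $0$ as $\rho\to 0$.

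The heart of the argument is a contraction on the half-line in an exponentially weighted space. Fix $\omega'\in(0,\omega)$ and work in
\begin{equation*}
\mathcal{Z}:=\bigl\{(w,\eta)\colon e^{\omega' t}w\in\E_{1,\bar\mu}(\mathbb{R}_+;Y_0^c),\ e^{\omega' t}\dot\eta\in L_\infty(\mathbb{R}_+;N(A_0))\bigr\}
\end{equation*}
endowed with the natural norm. Since $\sigma(-A_1)\subset\{\operatorname{Re} z<-\omega\}$, the shifted operator $A_1-\omega'$ still belongs to $\mathcal{M}_{\bar\mu}(Y_1^c,Y_0^c)$ with maximal regularity constant uniform in $T\in(0,\infty)$, so the Duhamel formulation defines a self-map of a small ball of $\mathcal{Z}$ whose Lipschitz constant is controlled by the (small) Lipschitz norms of $G_1,G_2$ together with $\|(w_0,\eta_0)\|$. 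Banach's fixed point theorem then yields a unique global solution satisfying $\|w(t)\|_{E_{\bar\mu}}\le Ce^{-\omega' t}\|(w_0,\eta_0)\|$ and $|\dot\eta(t)|\le Ce^{-\omega' t}\|(w_0,\eta_0)\|$; integrating the second estimate produces the limit $\eta(t)\to\eta_\infty$ at rate $e^{-\omega' t}$, and setting $u_\infty:=u_*+\eta_\infty+\phi(\eta_\infty)\in\cE$ gives the claimed exponential convergence $u(t,x_0)\to u_\infty$ in $E_{\bar\mu}$. Stability in $E_{\bar\mu}$, hence in $E_\mu$ by Proposition~\ref{pro:stability}, follows from the same estimates evaluated at $t=0$.

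The main obstacle is performing this half-line, exponentially weighted fixed-point argument inside the \emph{continuous} maximal regularity scale $BC_{1-\bar\mu}$ rather than inside the $L_p$-scale used in \cite{PSZ09a,PSZ09b}. Concretely, one must check that $A_1-\omega'\in\mathcal{M}_{\bar\mu}(Y_1^c,Y_0^c)$ with a maximal regularity constant independent of $T\in(0,\infty)$, which rests on the exponential gap in $\sigma(-A_1)$ and on the analyticity of the semigroup it generates; together with the standard smoothing estimates on $N(A_0)$ from the Duhamel formula for $\dot\eta=G_2(\eta,w)$, this is what lets the weighted contraction close. Once this persistence of continuous maximal regularity under the exponential weight is in place, the remaining contraction and convergence estimates mirror those already carried out in the proof of Theorem~\ref{thm:MainResult}.
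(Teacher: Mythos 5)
Your strategy is logically sound, but you are doing far more work than the paper does, and the reason is a misreading of what \cite{PSZ09a} provides. You write that the main obstacle is ``performing this half-line, exponentially weighted fixed-point argument inside the continuous maximal regularity scale $BC_{1-\bar\mu}$ rather than inside the $L_p$-scale used in \cite{PSZ09a,PSZ09b}.'' In fact the paper's entire proof of the $E_{\bar\mu}$-convergence is a two-line citation: ``Example 2 and Theorem~3.1 in \cite{PSZ09a}'' already establish the generalized principle of linearized stability in the continuous maximal regularity framework, and the standing hypothesis \eqref{A-F1-F2} (with $\bar\mu\ge\beta$, so $F=F_1+F_2$ is a genuine $C^1$ map on a neighborhood in $E_{\bar\mu}$ without any singular structure) is exactly what puts the equation in the form that Example 2 of that reference requires. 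The remainder of the paper's proof is precisely the transfer step you also use: Proposition~\ref{pro:stability} promotes stability from $E_{\bar\mu}$ to $E_\mu$, and the chain \eqref{AA}--\eqref{AB} shows that $x_0\in B_{E_\mu}(u_*,\delta)$ gives $u(\tau,x_0)\in B_{E_{\bar\mu}}(u_*,\delta_1)$, after which one concatenates with the semiflow property. So what you are proposing---the spectral projection $P_0$, the graph parametrization $\phi$ of $\cE$, the decomposition $u=u_*+\eta+\phi(\eta)+w$, and the contraction in the exponentially weighted space $\mathcal{Z}$---is a reconstruction of the proof \emph{inside} \cite{PSZ09a}, not an adaptation of it. Your sketch of that reconstruction is broadly correct (the small-Lipschitz property of $G_1,G_2$, the uniform-in-$T$ maximal regularity of $A_1-\omega'$ coming from the spectral gap, the convergence of $\eta(t)$ by integrating $|\dot\eta|\le Ce^{-\omega't}$), and if one wanted a self-contained exposition it would be the right outline to flesh out; what it buys you is independence from the reference, at the cost of re-proving several nontrivial facts---most notably the persistence of continuous maximal regularity on the half-line under exponential shifts---that the paper simply inherits. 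The two proofs therefore share the same skeleton, but the paper's is a short reduction to a known theorem while yours amounts to re-proving that theorem.
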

\begin{proof}
Example 2 and Theorem 3.1 in \cite{PSZ09a} imply  stability of $u_*$ in $E_{\bar\mu}$.
Moreover, the same theorem ensures that there exists $\delta_1>0$ such that
each solution $u(\cdot,y_0)$ of \eqref{QuasiEqn} with initial value $y_0\in B_{E_{\bar\mu}}(u_*,\delta_1)$ exits globally and 
converges to some $u_\infty\in\cE$ in the topology of $E_{\bar\mu}$ at an exponential rate.

By Proposition~\eqref{pro:stability},  $u_*$ is stable in $E_\mu$ as well.
Employing \eqref{AA}-\eqref{AB}, we deduce that there exists $\delta=\delta(\delta_1,\bar\mu)>0$ such that
\begin{equation*}
|u(\tau,x_0)-u_*|_{E_{\bar\mu}}<\delta_1 \; \text{ for each }\; x_0\in B_{E_\mu}(u_*,\delta).
\end{equation*}
As $u(\tau,x_0)\in E_{\bar\mu}$ and $u(t, u(\tau,x_0))=u(t+\tau,x_0)$ for $t\ge\tau$, the convergence assertion of the Theorem
follows from the first part of the proof.
\end{proof}
\begin{remark}
Theorem~\ref{thm:normally-stable} yields convergence 
of $u(\cdot,x_0)$ in the stronger norm of $E_{\bar\mu}$ for initial values in $E_\mu$.
We note that this holds true for any  $\bar\mu\in [\beta,1)$,
with $\delta$ depending on $\bar\mu$.
\end{remark}

\section{Applications to Surface Diffusion Flow}\label{sec:SDFlow}
In this section, we apply the theory from the previous sections to extend results 
regarding the surface diffusion flow in various settings. 
First, we extend \cite[Proposition 3.2]{LS16} regarding well--posedness of the 
surface diffusion flow in the setting of so--called \textit{axially--definable} surfaces. 
We then prove nonlinear stability of cylinders with radius $r > 1$ (as equilibria of surface diffusion flow) 
under a general class of periodic perturbations which only require control of first--order derivatives;
this result extends \cite[Theorem 4.3]{LS16} and \cite[Theorem 4.9]{LS13}
where control of second--order derivatives was also required.
At the conclusion of the section, we establish general well--posedness for
surface diffusion flow acting on surfaces parameterized over 
a compact reference
manifold $\Sigma \subset \R^n$, 
and conclude normal stability of Euclidean spheres under $bc^{1+\alpha}$ perturbations.

\subsection{Axially--Definable Setting: Well--Posedness}\label{subsec:AxiallyDefinable}
We begin with a brief introduction to the axially--definable setting and formulation of the problem;
for a more detailed account we direct the reader to \cite[Sections 2 and 3]{LS16}.

First, given $r > 0$, let 
\[
	\mathcal{C}_r := \{ \big(x, r\cos(\theta), r\sin(\theta)\big): x \in \R, \theta \in \mathbb{T}\}
\]
denote the unbounded cylinder in $\R^3$ of radius $r$, where 
$\mathbb{T} := [0,2\pi]$ denotes the one--dimensional torus, with
$0$ and $2\pi$ identified.
Next, we fix a parameter $\alpha \in (0,1)$ and define the Banach spaces
\begin{equation}\label{ESpaces}
	E_0 := bc^\alpha(\mathcal{C}_r) \qquad \text{and} \qquad
	E_1 := bc^{4 + \alpha}(\mathcal{C}_r),
\end{equation}
where $bc^{k + \alpha}$, $k \in \mathbb{N}$, denotes the family of $k$--times differentiable
\textit{little--H\"older} regular functions.
In particular, on an open set $U \in \R^n$, $bc^{\alpha}(U)$ is defined as the closure of the
bounded smooth functions $BC^{\infty}(U)$ in the topology of $BC^{\alpha}(U),$ the Banach
space of all bounded H\"older--continuous functions of exponent $\alpha$. 
Then $bc^{k+\alpha}(U)$ consists of functions having continuous and bounded derivatives 
of order $k$, whose $k^{\rm th}$--order derivatives are in $bc^{\alpha}(U)$.
The space $bc^{k+\alpha}(\mathcal{C}_r)$ is defined via an atlas of local charts.

Taking $\mu = 1/4$ and $\beta = 3/4$, we define the continuous interpolation spaces
$E_\mu := (E_0, E_1)_{\mu, \infty}^0$ and $E_\beta := (E_0, E_1)_{\beta, \infty}^0.$
It is well--known that the scale of little--H\"older spaces is closed under continuous
interpolation (c.f. \cite{Lun95} and \cite{SS14}) and so these spaces are likewise identified as
\[
	E_\mu = bc^{1+\alpha}(\mathcal{C}_r) \qquad \text{and} \qquad
	E_\beta = bc^{3+\alpha}(\mathcal{C}_r).
\]
With the spaces $E_0, E_\mu, E_\beta, E_1$ thus set, note that condition 
\eqref{Criticality} becomes
\[
	\frac{\rho_j}{2} + \beta_j \le 1,
\]
so that we have a critical index $j$ exactly when $\rho_j/2 + \beta_j = 1$.
Further, with $\varepsilon > 0$ fixed, we define the family of admissible initial values 
(which coincides with surfaces that remain bounded away from the central axis of rotation)
\[
	V_\mu := E_\mu \cap \{ h: \mathcal{C}_r \to \R \ |\  h(p) > \varepsilon - r 
		\text{ for all $p \in \mathcal{C}_r$} \}.
\]

We say that a surface $\Gamma \subset \R^3$ is \textit{axially--definable} if it can be
parameterized as
\[
	\Gamma = \Gamma(h) = \{ p + h(p)\nu(p) : p \in \mathcal{C}_r \}
\]
for some \textit{height function} $h : \mathcal{C}_r \to \R$ satisfying
$h > -r$ on $\mathcal{C}_r$,
where $\nu$ denotes the outer unit normal field over $\mathcal{C}_r$.
In the setting of axially--definable surfaces, the surface diffusion flow is expressed  
as the following evolution equation for time--dependent height functions $h = h(t, p) = h(t,x,\theta)$: 
\begin{equation}\label{SDFlow}
\begin{cases}
	h_t(t,p) = [G(h(t))](p), &\text{for $t > 0$, $p \in \mathcal{C}_r$}\\
	h(0) = h_0, &\text{on $\mathcal{C}_r$.}
\end{cases}
\end{equation}
As shown in \cite[Section 2.2]{LS16}, the evolution operator $G$ takes the form
\begin{equation}\label{SDOperator}
\begin{aligned}
	G(h) := 
		\frac{1}{(r + h)} &\left\{ \partial_x \left[ \frac{(r + h)^2 + h_\theta^2}
				{\sqrt{\mathcal{G}}} \ \partial_x \mathcal{H}(h) 
		- \frac{h_x h_\theta}{\sqrt{\mathcal{G}}} \ 
				\partial_\theta \mathcal{H}(h) \right] \right.\\
	& \quad + \ \partial_\theta \left.\left[ 
		\frac{1 + h_x^2}{\sqrt{\mathcal{G}}} \ \partial_\theta 	
			\mathcal{H}(h) 
		- \frac{\rho_x h_\theta}{\sqrt{\mathcal{G}}} \ \partial_x \mathcal{H}(h) 
			\right] \right\},
\end{aligned}
\end{equation}
where $\mathcal{H}(h)$ denotes the mean curvature of the surface $\Gamma(h)$
and $\mathcal{G} = \mathcal{G}(h)$ is the
determinant of the first fundamental form $[g_{ij}]=[g_{ij}(h)]$ on $\Gamma(h)$.

Using \cite[Equations (2.2)--(2.3)]{LS16},
one can expand \eqref{SDOperator} to see that $G(h)$ is a fourth--order
quasilinear operator of the form
\[
\begin{aligned}
	G(h) &= -A(h) h + F_1(h) + F_2(h) \\
		&:= - \left( \sum_{|\eta| = 4} b_\eta(h, \partial^1 h) \ \partial^\eta h \right) 
			+ F_1(h, \partial^1 h) 
			+ F_2(h, \partial^1 h, \partial^2 h, \partial^3 h),
\end{aligned}
\]
where $\eta = (\eta_1, \eta_2) \in \mathbb{N}^2$ is a multi--index, $|\eta| := \eta_1 + \eta_2$ 
its length, $\partial^\eta := \partial_x^{\eta_1}\partial_\theta^{\eta_2}$ the mixed 
partial derivative operator, and $\partial^k h$ denotes the vector of all 
derivatives $\partial^\eta h$ for $|\eta| = k.$
We note that $A(h)$ here contains only the highest--order terms of the operator
$\mathcal{A}(h)$ expressed in \cite[Section 3.2]{LS16} --- which is essential in the current
setting to ensure the coefficients $b_\eta(h, \partial^1 h)$ are well--defined for 
$h \in E_\mu = bc^{1 + \alpha}(\mathcal{C}_r)$.
It follows that the principal symbols $\sigma[A(h)]$ and $\sigma[\mathcal{A}(h)]$
coincide, so by \cite[Equation~(3.3)]{LS16} we have
\begin{equation}\label{A-Elliptic}
	\sigma[A(h)](p,\xi) \ge \frac{1}{\mathcal{G}^2} 
		\bigg( (r+h)^2 \xi_1^2 + \xi_2^2 \bigg)^2 \qquad \text{for} \quad
		(p,\xi) \in \mathcal{C}_r \times \R^2.
\end{equation}
This last result implies uniform ellipticity of $A(h)$ on $\mathcal{C}_r$
and thus with \cite[Proposition~3.1]{LS16} we have
\begin{equation}\label{A-F1-F2-regularity}
\begin{aligned}
	(A,F_1) &\in C^{\omega} \big( V_\mu, \mathcal{M}_\mu (E_1, E_0) \times E_0 \big)\\
	F_2 &\in C^{\omega} \big( V_\mu \cap E_\beta, E_0 \big).
\end{aligned}
\end{equation}

We have now confirmed that the mappings $A, F_1$ and $F_2$ satisfy properties 
{\bf (H1)} and \eqref{A-F1-F2}.
Regarding confirmation of the structural conditions {\bf (H2)}, we expand terms 
of \eqref{SDOperator} to confirm
\begin{equation}\label{F2Structure}
\begin{aligned}
	F_2(h) &= \sum_{\substack{|\eta| = 3\\|\tau| \le 2}} 
		c_{\eta, \tau}(h) \, \partial^\tau h \, \partial^\eta h\\
		&\qquad 
		+ \sum_{\max\{|\eta|, |\tau|, |\sigma|\}=2 } d_{\eta, \tau, \sigma}(h) 
		\, \partial^\eta h \, \partial^\tau h \, \partial^\sigma h,
\end{aligned}
\end{equation}
where the functions $c_{\eta, \tau}, \ d_{\eta, \tau, \sigma}$ depend only upon $h$ 
and $\partial^1 h$, and are analytic by 
\eqref{A-F1-F2-regularity}.
Of particular importance in \eqref{F2Structure}, we note that third--order derivatives
of $h$ appear linearly in terms with at most linear factors of $\partial^2 h$,
while lower--order terms include at most cubic factors of $\partial^2 h.$

Letting $h_0 \in V_\mu$ and $R > 0$, we choose 
$h_1, h_2 \in \bar{B}_{E_\mu}(h_0, R) \cap (V_\mu \cap E_\beta)$ and 
use \eqref{F2Structure} to bound $|F_2(h_1) - F_2(h_2)|_{E_0}$.
Throughout the following computations, we use $\tilde{C}$ to denote a
generic constant that depends only upon $R$ and $|h_0|_{E_\mu} = |h_0|_{bc^{1+\alpha}}$.

Considering the first term in \eqref{F2Structure}, when $|\tau| \le 1$ 
we incorporate $\partial^\tau h_i$ into $c_{\eta, \tau}(h_i)$ to derive bounds depending 
only on zeroth and first--order derivatives, $i=1,2$. Hence, when $|\tau| \le 1$ we have
\[
	|c_{\eta, \tau}(h_1) \, \partial^\eta h_1\, \partial^\tau h_1 - 
		c_{\eta, \tau}(h_2) \, \partial^\eta h_2\, \partial^\tau h_2|_{E_0} \le
		\tilde{C}|h_1 - h_2|_{bc^{3+\alpha}},
\]
which is a term as in \eqref{Structural} corresponding to $(\rho_j, \beta_j) = (0, 3/4)$
(which is a subcritical index since $\rho_j/2 + \beta_j < 1$). 
Meanwhile, when $|\tau| = 2$, we rewrite
\begin{equation}\label{cDifference}
\begin{aligned}
	c_{\eta, \tau}(h_1) \,\partial^\eta h_1 \,\partial^\tau h_1 &-
		c_{\eta,\tau}(h_2) \,\partial^\eta h_2 \,\partial^\tau h_2 = \\
		&c_{\eta,\tau}(h_1)\Big(\partial^\eta h_1(\partial^\tau h_1 - \partial^\tau h_2)
			+ \partial^\tau h_2(\partial^\eta h_1 - \partial^\eta h_2)\Big)\\
		&+\partial^\eta h_2 \, \partial^\tau h_2 \Big(c_{\eta,\tau}(h_1) - c_{\eta,\tau}(h_2)\Big)
\end{aligned}
\end{equation}
and note that $c_{\eta,\tau}$ is locally Lipschitz continuous in $E_\mu$ in order
to bound the previous expression in $E_0$ by
\[
	\tilde{C}\Big(|h_1|_{E_\beta}|h_1-h_2|_{bc^{2+\alpha}}
		+ |h_2|_{bc^{2+\alpha}}|h_1-h_2|_{bc^{3+\alpha}}
		+ |h_2|_{bc^{2+\alpha}}|h_2|_{E_\beta}|h_1-h_2|_{bc^{1+\alpha}}\Big).
\]
Further, by the reiteration theorem for continuous interpolation (c.f. \cite[Section~I.2.8]{Am95}), we have 
$bc^{2+\alpha} = (E_\mu, E_\beta)_{1/2,\infty}^{0}$. 
Thus,
\begin{equation}\label{reiteration}
	|h_i|_{bc^{2+\alpha}} \le \tilde{C}|h_i|_{bc^{3+\alpha}}^{1/2} 
		= \tilde{C}|h_i|_{E_\beta}^{1/2}
\end{equation}
and so, when $|\tau| = 2$, we bound
\[
\begin{aligned}
	|c_{\eta, \tau}(h_1) \, \partial^\eta h_1\, \partial^\tau h_1 &- 
		c_{\eta, \tau}(h_2) \, \partial^\eta h_2\, \partial^\tau h_2|_{E_0} \le
		\tilde{C}\Big(|h_1|_{E_\beta}|h_1-h_2|_{bc^{2+\alpha}}\\
		&+ |h_2|^{1/2}_{E_\beta}|h_1-h_2|_{bc^{3+\alpha}}
		+ |h_2|^{3/2}_{E_\beta}|h_1-h_2|_{bc^{1+\alpha}}\Big),
\end{aligned}
\]
which are three critical terms with $(\rho_j, \beta_j) = (1,1/2)$,
$(\rho_j, \beta_j) = (1/2, 3/4),$ and $(\rho_j, \beta_j)=(3/2,1/4),$ respectively. 

Moving on to the second term in \eqref{F2Structure}, we first rewrite the difference
\begin{equation}\label{dDifference}
\begin{aligned}
	d_{\eta,\tau,\sigma}&(h_1)\, \partial^\eta h_1 \, \partial^\tau h_1 \, \partial^\sigma h_1
	- d_{\eta,\tau,\sigma}(h_2)\, \partial^\eta h_2 \, \partial^\tau h_2 \, \partial^\sigma h_2\\
	&= d_{\eta,\tau,\sigma}(h_1)
		\Big(\partial^\eta h_1 \, \partial^\tau h_1(\partial^\sigma h_1 - \partial^\sigma h_2)\\
	&\qquad+ \partial^\eta h_1 \, \partial^\sigma h_2(\partial^\tau h_1 - \partial^\tau h_2)
	+ \partial^\tau h_2 \, \partial^\sigma h_2 (\partial^\eta h_1 - \partial^\eta h_2)\Big)\\
	&\qquad+ \partial^\eta h_2 \, \partial^\tau h_2 \, \partial^\sigma h_2
		\Big(d_{\eta,\tau,\sigma}(h_1)-d_{\eta,\tau,\sigma}(h_2)\Big)
\end{aligned}
\end{equation}
and then we proceed by splitting our analysis into three cases.

First, consider the case when only one of the terms $|\eta|, |\tau|,$ or $|\sigma|$ is 
equal to two. In this case, we again employ local Lipschitz continuity of the functions 
$d_{\eta,\tau,\sigma}$ and properties of interpolation to bound \eqref{dDifference}
in $E_0$ by
\[
	\tilde{C}\Big(
	|h_1|_{E_\beta}^{1/2}|h_1-h_2|_{bc^{1+\alpha}} + 
	|h_2|_{E_\beta}^{1/2}|h_1-h_2|_{bc^{1+\alpha}} + 
	|h_1 -h_2|_{bc^{2+\alpha}} +
	|h_2|_{E_\beta}^{1/2}|h_1-h_2|_{bc^{1+\alpha}}\Big),
\]
which results in two subcritical terms (after combining expressions)
with $(\rho_j, \beta_j) = (1/2,1/4)$ and $(\rho_j, \beta_j) = (0,1/2).$

Next, we consider the case when exactly two of the terms $|\eta|, |\tau|,$ and
$|\sigma|,$ equal two. Without loss of generality, we take $|\sigma|=1$, since
we can arrive at similar expressions to \eqref{dDifference} with any combination of multi--indices
contained in the cross--term $\partial^\eta h_1 \, \partial^\sigma h_2$.
Similar to the previous case, we majorize \eqref{dDifference} in $E_0$,
using \eqref{reiteration},
\[
\begin{aligned}
	&\tilde{C}\Big(
	|h_1|_{bc^{2+\alpha}}^2|h_1-h_2|_{bc^{1+\alpha}} +
	|h_1|_{bc^{2+\alpha}}|h_1-h_2|_{bc^{2+\alpha}} \\
	&\qquad +|h_2|_{bc^{2+\alpha}}|h_1-h_2|_{bc^{2+\alpha}} +
	|h_2|_{bc^{2+\alpha}}^2|h_1-h_2|_{bc^{1+\alpha}}\Big)\\
	&\le \tilde{C}\Big(
	|h_1|_{E_\beta}|h_1-h_2|_{bc^{1+\alpha}} +
	|h_1|_{E_\beta}^{1/2}|h_1-h_2|_{bc^{2+\alpha}} \\
	&\qquad\quad +|h_2|_{E_\beta}^{1/2}|h_1-h_2|_{bc^{2+\alpha}} +
	|h_2|_{E_\beta}|h_1-h_2|_{bc^{1+\alpha}}\Big),
\end{aligned}
\]
which contributes two additional subcritical terms with $(\rho_j, \beta_j) = (1,1/4)$
and $(\rho_j, \beta_j) = (1/2,1/2).$

Finally, in case $|\eta|=|\tau|=|\sigma|=2$, we bound \eqref{dDifference} by,
again employing \eqref{reiteration},
\[
\begin{aligned}
	&\tilde{C}\Big(
	\big(|h_1|_{E_\beta} + 
	|h_1|_{E_\beta}^{1/2}|h_2|_{E_\beta}^{1/2} + 
	|h_2|_{E_\beta} \big)|h_1-h_2|_{bc^{2+\alpha}} +
	|h_2|_{E_\beta}^{3/2}|h_1-h_2|_{bc^{1+\alpha}}\Big)\\
	&\le \tilde{C}\Big(
	|h_1|_{E_\beta}|h_1-h_2|_{bc^{2+\alpha}} +
	|h_2|_{E_\beta}|h_1-h_2|_{bc^{2+\alpha}} +
	|h_2|_{E_\beta}^{3/2}|h_1-h_2|_{bc^{1+\alpha}}\Big),
\end{aligned}
\]
where we have also applied Young's inequality to get 
\[
	|h_1|_{E_\beta}^{1/2}|h_2|_{E_\beta}^{1/2} \le 
		\frac{1}{2}|h_1|_{E_\beta} + \frac{1}{2}|h_2|_{E_\beta}.
\]
We thus produce two critical terms in this case, with
$(\rho_j, \beta_j) = (1,1/2)$ and $(\rho_j, \beta_j) = (3/2,1/4).$

Together with the analysis for the first term of \eqref{F2Structure}, we have
thus demonstrated that $F_2$ satisfies the structural condition {\bf (H2)}
and we can now apply Theorem~\ref{thm:MainResult} to produce the following results. 
Moreover, we note that $\mu = 1/4$ is the critical weight for $F_2,$ which 
indicates that $bc^{1+\alpha}(\mathcal{C}_r)$ is the critical space for \eqref{SDFlow}.

\medskip
\begin{thm}[{\bf Well--Posedness of \eqref{SDFlow}}] \label{thm:SDFlowWellPosed}
	Fix $\varepsilon > 0$ and $\alpha \in (0,1)$. 
\begin{itemize}
	\item[{\bf (a)}] For each initial value 
	\[
		h_0 \in V_\mu := bc^{1 + \alpha}(\mathcal{C}_r) \cap [h > \varepsilon - r],
	\]
	there exists a unique maximal solution to \eqref{SDFlow} with the addition property
	\[
		h(\cdot, h_0) \in C([0,t^+(h_0)), bc^{1 + \alpha}(\mathcal{C}_r)) 
			\cap C((0,t^+(h_0)), bc^{4 + \alpha}(\mathcal{C}_r)).
	\]
	Further, it follows that the map $[(t, h_0) \mapsto h(t,h_0)]$
	defines a semiflow on $V_\mu$ which is analytic for $t > 0$ and Lipschitz
	continuous for $t \ge 0$.
	
	\medskip
	\item[{\bf (b)}]
	Moreover, if the solution $h(\cdot, h_0)$ satisfies:
	\begin{itemize}
		\vspace{1mm}\item[(i)] $h(\cdot, h_0) \in UC(J(h_0),bc^{1+\alpha}(\mathcal{C}_r)),$ and
		\item[(ii)] there exists $M > 0$ so that, for all $t \in J(h_0) := [0,t^+(h_0))$,
		\begin{itemize}
			\vspace{1mm}\item[(ii.a)] $h(t,h_0)(p) \ge 1/M - r$ 
				for all $p \in \mathcal{C}_r,$ and
			\vspace{1mm}\item[(ii.b)] $| h(t,h_0) |_{bc^{1 + \alpha}(\mathcal{C}_r)} \le M$,
		\end{itemize}
	\end{itemize}
	then it holds that $t^+(h_0) = \infty$ and $h(\cdot, h_0)$ is a 
	global solution of \eqref{SDFlow}.
\end{itemize}
\end{thm}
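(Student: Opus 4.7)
The plan is to apply Theorem~\ref{thm:MainResult} and Corollary~\ref{SemiflowRegularity} directly, since the preceding discussion has already verified the abstract hypotheses \textbf{(H1)} and \textbf{(H2)} for the surface diffusion operator in the axially--definable setting. Hypothesis \textbf{(H1)} is supplied by \eqref{A-F1-F2-regularity}, where membership $A(h) \in \mathcal{M}_\mu(E_1,E_0)$ rests on the uniform ellipticity estimate \eqref{A-Elliptic} together with the standard continuous maximal regularity theory for fourth--order uniformly elliptic operators on little--H\"older scales. Hypothesis \textbf{(H2)} was verified term--by--term in the preceding analysis of \eqref{F2Structure}: the collection of exponent pairs $(\rho_j, \beta_j)$ produced there (including the subcritical pairs $(0, 3/4)$, $(1/2,1/4)$, $(0,1/2)$, $(1,1/4)$, $(1/2,1/2)$ and the critical pairs $(1, 1/2)$, $(1/2, 3/4)$, $(3/2,1/4)$) all satisfy \eqref{Criticality} with $\mu = 1/4$, confirming that $\mu_{\rm crit} = 1/4$.

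For part \textbf{(a)}, local existence and uniqueness of a maximal solution together with the stated regularity $h(\cdot,h_0) \in C([0,t^+(h_0)), E_\mu) \cap C((0,t^+(h_0)), E_1)$ follows directly from Theorem~\ref{thm:MainResult}(a)--(b), and the Lipschitz continuous semiflow property on $V_\mu$ is Corollary~\ref{SemiflowRegularity}. For analyticity of $[(t,h_0) \mapsto h(t,h_0)]$ on $t > 0$, I would invoke the parameter--trick of Angenent: leveraging the analytic dependence $(A,F_1,F_2) \in C^\omega$ from \eqref{A-F1-F2-regularity}, one applies the analytic implicit function theorem to a family of rescaled equations obtained by time translation and dilation, using continuous maximal regularity of the linearization at the solution as the invertibility input.

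For part \textbf{(b)}, the task reduces to verifying that (i), (ii.a), (ii.b) imply the hypotheses of Theorem~\ref{thm:MainResult}(c). Condition (i) is identical in both formulations. For the distance--to--boundary requirement, observe that $V_\mu$ is the intersection of $E_\mu$ with the open set $\{h : \min_{\mathcal{C}_r} h > \varepsilon - r\}$, so that $\partial V_\mu \subset \{h : \min_{\mathcal{C}_r} h = \varepsilon - r\}$. The continuous embedding $bc^{1+\alpha}(\mathcal{C}_r) \hookrightarrow BC(\mathcal{C}_r)$ implies that $\mathrm{dist}_{E_\mu}(h(t,h_0), \partial V_\mu)$ is bounded below, up to the embedding constant, by $\min_p h(t,h_0)(p) - (\varepsilon - r)$, which is uniformly positive under (ii.a) as long as $1/M > \varepsilon$. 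The uniform bound (ii.b) supplies the $E_\mu$--boundedness needed to complement (i), yielding global existence $t^+(h_0) = \infty$.

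The principal obstacle I anticipate is not conceptual — the genuinely new analysis has been carried out in the term--by--term verification of \textbf{(H2)} — but rather the careful translation of the concrete pointwise bound (ii.a) into the abstract open--set distance condition of Theorem~\ref{thm:MainResult}(c), and the technical (but standard) execution of the parameter--trick needed for analyticity of the semiflow in $t$. Neither step requires ideas beyond those already present in the cited abstract theory and the preceding structural analysis.
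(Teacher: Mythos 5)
Your proposal is essentially correct and follows the paper's route for parts (a) and (b): verify \textbf{(H1)} via \eqref{A-Elliptic} and \eqref{A-F1-F2-regularity}, verify \textbf{(H2)} via the term-by-term expansion of \eqref{F2Structure} culminating in the listed exponent pairs $(\rho_j,\beta_j)$, then apply Theorem~\ref{thm:MainResult}(a)--(b) and Corollary~\ref{SemiflowRegularity} for local well-posedness, regularity and Lipschitz dependence, and Theorem~\ref{thm:MainResult}(c) for global existence. Your translation of (ii.a)--(ii.b) into the abstract distance-to-boundary condition is also what the paper has in mind, and your observation that $1/M > \varepsilon$ is needed for (ii.a) to give nontrivial boundary distance is a fair and correct reading of the hypotheses.

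The one place where you depart from the paper is the analyticity claim. You propose applying Angenent's parameter trick directly on $V_\mu$, citing the analyticity of $(A,F_1,F_2)$ from \eqref{A-F1-F2-regularity}. This is delicate: $F_2$ is only defined (and only analytic) on $V_\mu\cap E_\beta$, not on $V_\mu$ itself, and the time-weighted $\E_{1,\mu}$ framework with the singular right-hand-side structure \textbf{(H2)} is not the standard setting in which the parameter trick has been packaged. The paper sidesteps this entirely: by Theorem~\ref{thm:MainResult}(b), the solution regularizes instantaneously, so for any $\tau>0$ one has $h(\tau,h_0)\in V_\beta := bc^{3+\alpha}(\mathcal{C}_r)\cap[h>\varepsilon-r]$, and on $V_\beta$ the full nonlinearity $F_1+F_2$ is analytic into $E_0$ with no singular structure. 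One then invokes the analytic semiflow result \cite[Theorem 6.1]{CS01} in the standard (non-singular) setting with base point $V_\beta$, and the analyticity for $t>\tau$ (arbitrary $\tau>0$) in $V_\mu$ follows by embedding. Your approach could in principle be made to work via the same regularization step --- apply the parameter trick only from $t=\tau>0$ onward where the solution lives in $E_1$ --- but as written it does not acknowledge the domain issue for $F_2$, and running the trick in $V_\mu$ without first upgrading the regularity would require an argument you have not supplied. The regularization-first route buys you access to the pre-packaged abstract result and avoids re-deriving analyticity in the singular setting.
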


\medskip
\begin{remark}
{\bf (a)} 
Of particular note in Theorem~\ref{thm:SDFlowWellPosed}, control of 
first--order derivatives of $h$ is sufficient to determine the lifespan of maximal solutions.
This improves previous results in the axially--definable setting (c.f. \cite[Proposition 3.2]{LS16})
which required control of second--order derivatives of solutions as well.

\medskip
{\bf (b)}
Regarding analyticity of the semiflow $[(t,h_0)\mapsto h(t,h_0)]$ for $t > 0$: for
any $\tau > 0$ we note that 
\[
	h(\tau, h_0) \in V_\beta := bc^{3+\alpha}(\mathcal{C}_r) \cap [h > \varepsilon - r].
\]
Thus, analyticity holds for $t > \tau$ in $V_\beta$ by \cite[Theorem 6.1]{CS01} and 
\eqref{A-F1-F2-regularity}, and then analyticity also holds in $V_\mu$ by embedding.

\medskip
{\bf (c)}
In the setting of surfaces expressed as graphs over $\R^n$, existence and uniqueness of
solutions with initial data in $bc^{1+\alpha}(\R^n)$ was established in \cite[Theorem 4.2]{Asai12}. 
However, we note that the author requires initial values to be slightly more regular than 
those considered here, due to the fact that he tracks regularity of solutions in a different 
topology than that of the space where he takes initial data. 
\end{remark}

\medskip
\subsection{Axially--Definable Setting: Stability of Cylinders}

Considering the stability of cylinders as equilibria for \eqref{SDFlow}, we first introduce 
the $2\pi$--periodic little--H\"older spaces $bc_{per}^{k+\alpha}, k \in \mathbb{N},$ 
defined as the subspace of functions $h \in bc^{k + \alpha}(\mathcal{C}_r)$ exhibiting
$2\pi$--periodicity along the $x$--axis; i.e.
\[
	h(x\pm2\pi, \theta) = h(x,\theta) \qquad \text{for all $(x,\theta) \in \mathcal{C}_r.$}
\]
As shown in \cite[Sections 3.4--4.1]{LS16}, working in this setting allows access 
to Fourier series representations for height functions $h$ and guarantees 
the linearized operator $DG(h)$ has a discrete spectrum. 

Regarding well--posedness in the periodic setting, it was shown in \cite[Proposition~3.4]{LS16}
that $G$ preserves periodicity, so Theorem~\ref{thm:SDFlowWellPosed}(a) continues to
hold verbatim with $bc_{per}$ replacing $bc$ throughout. Meanwhile, we note that
global solutions in the periodic setting differ slightly from Theorem~\ref{thm:SDFlowWellPosed}(b)
owing to the compactness of the embedding 
$bc_{per}^{4+\alpha}(\mathcal{C}_r) \hookrightarrow bc_{per}^{\alpha}(\mathcal{C}_r)$
(c.f. Theorem~\ref{thm:MainResult}(c)). 

Noting that $h_* \equiv 0$ is always an equilibrium of \eqref{SDFlow} (which coincides with the observation
that the cylinder $\mathcal{C}_r$ is an equilibrium of surface diffusion flow), we consider the stability
of $h_*$ under perturbations in 
\[
	V_{\mu, per} := bc_{per}^{1+\alpha}(\mathcal{C}_r) \cap [h > -r].
\]
Further, we denote by $\mathcal{M}_{cyl}$ the family of height functions 
$\bar h$ such that $\Gamma(\bar h)$ defines a cylinder 
$C(\bar y, \bar z, \bar r)$ --- symmetric about axis $(\cdot,\bar y, \bar z)$ in $\R^3$
with radius $\bar r > 0$ --- in a neighborhood of $\mathcal{C}_r$.
With these preparations, we state the following stability result.

\medskip
\begin{thm}[Global existence, stability / instability of cylinders]\label{thm:SDFlowStability}
Fix $\alpha \in (0,1)$.
\begin{itemize}
\item[{\bf (a)}] {\bf (Global Existence)} Let $h_0 \in V_{\mu, per}$ and suppose there exists
a constant $M > 0$ so that
\begin{itemize}
	\vspace{1mm}\item[(i)] $h(t,h_0)(p) \ge 1/M - r$ for all $t \in J(h_0) = [0,t^+(h_0))$, 
		$p \in \mathcal{C}_r$, and
	\vspace{1mm}\item[(ii)] $| h(t,h_0) |_{bc^{1+\delta}} \le M$ for all $t \in [\tau, t^+(h_0))$, 
		for some $\tau \in \dot{J}(h_0)$ and $\delta \in (\alpha,1)$,
\end{itemize}\vspace{1mm}
then $t^+(h_0) = \infty,$ so that $h(\cdot, h_0)$ is a global solution.
\vspace{1mm}
\item[{\bf (b)}] {\bf (Stability)} Fix $r > 1$ and $\bar \mu \in (0,1).$ 
	There exists a positive constant $\delta > 0$ 
	such that, given any admissible periodic perturbation
	\[
		h_0 \in V_{\mu, per} := bc_{per}^{1+\alpha}(\mathcal{C}_r) \cap [h > -r]
	\]
	with $| h_0 |_{bc^{1+\alpha}(\mathcal{C}_r)} < \delta$, the solution $h(\cdot, h_0)$ exists
	globally in time and converges to some $\bar h \in \mathcal{M}_{cyl}$ at an 
	exponential rate, in the topology of $E_{\bar\mu}.$ 
\vspace{1mm}
\item[{\bf (c)}] {\bf (Instability)} For $0<r<1$ the function $h_* \equiv 0$ is unstable in the 
	topology of $bc_{per}^{1+\alpha}(\mathcal{C}_r).$
\end{itemize}
\end{thm}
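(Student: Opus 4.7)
The plan is to combine the abstract well-posedness and stability results of Sections~\ref{sec:WellPosedness} and \ref{sec:Stability} with the structural setup of Subsection~\ref{subsec:AxiallyDefinable} and with the Fourier-spectral analysis of the linearization $A_0$ at $h_*\equiv 0$ available from earlier work on periodic surface diffusion flow.

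For part (a), I would invoke Theorem~\ref{thm:MainResult}(c) via its compact-embedding alternative. The embedding $E_1=bc^{4+\alpha}_{per}(\mathcal{C}_r)\hookrightarrow E_0=bc^{\alpha}_{per}(\mathcal{C}_r)$ is compact by Arzel\`a--Ascoli on a fundamental domain, so condition (i.a) is available. Within the little-H\"older interpolation scale, $E_\theta=bc^{\alpha+4\theta}_{per}(\mathcal{C}_r)$, and the space $bc^{1+\delta}_{per}$ coincides with $E_{(1+\delta-\alpha)/4}$, whose index strictly exceeds $\mu$ whenever $\delta>\alpha$. Hypothesis (ii) therefore furnishes an orbit bounded in some $E_{\delta'}$ with $\delta'>\mu$, which is exactly (i.a). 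For the distance condition, $\partial V_\mu$ consists of functions attaining the value $\varepsilon-r$; evaluating at the minimizing point yields $\|h(t,h_0)-h'\|_{C(\mathcal{C}_r)}\ge 1/M-\varepsilon$ for any $h'\in\partial V_\mu$, and one may shrink $M$ so that $1/M>\varepsilon$ (the lower bound is monotone in $M$), which gives a uniform $E_\mu$-distance from $\partial V_\mu$. Theorem~\ref{thm:MainResult}(c) then yields $t^+(h_0)=\infty$.

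For part (b) I would verify the four hypotheses of Theorem~\ref{thm:normally-stable} at $u_*=h_*\equiv 0$. The three-parameter family $\mathcal{M}_{cyl}$ of nearby cylinders $C(\bar y,\bar z,\bar r)$, parameterized by the two axis offsets and the radius, produces the $C^1$-manifold of equilibria ($m=3$), with local uniqueness in $E_1$ imported from \cite{LS16}; differentiability assumption \eqref{A-F1-F2} is inherited from \eqref{A-F1-F2-regularity}. The Fourier-series analysis carried out in \cite{LS13,LS16} then identifies $T_{h_*}\mathcal{M}_{cyl}$ with $N(A_0)$, shows that $0$ is semi-simple with precisely this three-dimensional kernel, and --- exactly when $r>1$ --- places $\sigma(-A_0)\setminus\{0\}$ strictly in the open left half-plane. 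Theorem~\ref{thm:normally-stable} then delivers stability in $E_\mu=bc^{1+\alpha}_{per}(\mathcal{C}_r)$ and exponential convergence to some $\bar h\in\mathcal{M}_{cyl}$ in $E_{\bar\mu}$ for any $\bar\mu\in[\beta,1)$; the remaining range $\bar\mu\in(0,\beta)$ follows by embedding.

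For part (c) I would route through Proposition~\ref{pro:stability}, which gives the equivalence of stability in $E_\mu$ and in $E_{\bar\mu}$, hence the equivalence of their negations. When $r<1$ the same Fourier computation produces eigenvalues of $-A_0$ with strictly positive real part, and a standard unstable-manifold construction in the sectorial setting of $E_{\bar\mu}=bc^{3+\alpha}_{per}(\mathcal{C}_r)$ yields, for every $\delta>0$, initial data of $E_{\bar\mu}$-norm less than $\delta$ whose solutions leave a fixed $E_{\bar\mu}$-neighborhood of $h_*$. The contrapositive of Proposition~\ref{pro:stability} then transports this instability into $bc^{1+\alpha}_{per}(\mathcal{C}_r)$. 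The main obstacle across parts (b) and (c) is the spectral package for $A_0$ --- identifying $N(A_0)$ with the cylinder-manifold tangent space, verifying semi-simplicity, and locating the rest of the spectrum as a function of $r$ --- but this input is essentially already available from \cite{LS13,LS16}; the contribution of the present framework is that once it is in hand, Proposition~\ref{pro:stability} and Theorem~\ref{thm:normally-stable} push the entire stability/instability dichotomy from the stronger $bc^{3+\alpha}_{per}$-topology down into the critical space $bc^{1+\alpha}_{per}(\mathcal{C}_r)$ with essentially no further work.
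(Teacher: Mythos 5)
Your proposal follows essentially the same route as the paper for all three parts: compact embedding plus distance-to-boundary for (a); normal stability of $h_*\equiv 0$ at the cylinder manifold (imported from the Fourier--spectral analysis of \cite{LS16,LS13}) fed into Theorem~\ref{thm:normally-stable} for (b); and instability in $bc^{3+\alpha}_{per}(\mathcal{C}_r)$ transferred down to $bc^{1+\alpha}_{per}(\mathcal{C}_r)$ via the contrapositive of Proposition~\ref{pro:stability} for (c). The extra detail you supply --- unwinding the normal-stability conditions at the three-parameter cylinder family, the embedding step extending Theorem~\ref{thm:normally-stable} from $\bar\mu\in[\beta,1)$ to all $\bar\mu\in(0,1)$, and the unstable-manifold construction behind part (c) --- is consistent with what the paper delegates to the cited references.

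One small slip in part (a): you cannot ``shrink $M$'' to force $1/M>\varepsilon$, since replacing $M$ by a smaller value strengthens hypothesis (i) and is not guaranteed by the mere existence of some $M$ satisfying it. The concern is in any case vacuous here, because $V_{\mu,per}$ in this theorem is defined with the cutoff $[h>-r]$ rather than $[h>\varepsilon-r]$; and even when one routes through the well-posedness machinery of Subsection~\ref{subsec:AxiallyDefinable}, the parameter $\varepsilon>0$ there is free, so one simply fixes $\varepsilon<1/M$ after $M$ is given. With that adjustment the distance-to-boundary argument is fine.
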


\begin{proof}
{\bf (a)}
This result follows from Theorem~\ref{thm:MainResult}(c), noting that 
$bc^{4+\alpha}_{per}(\mathcal{C}_r) \hookrightarrow bc^{\alpha}_{per}(\mathcal{C}_r)$
is a compact embedding in this periodic setting. 
Conditions (i)--(ii) guarantee the solution remains bounded away from 
the boundary $\partial V_{\mu, per}$. 

\medskip
{\bf (b)}
First note that restricting the domains of $(A,F_1,F_2)$ to periodic little--H\"older spaces 
will maintain the conditions {\bf (H1)--(H2)} and \eqref{A-F1-F2-regularity}, all confirmed in  
Section~\ref{subsec:AxiallyDefinable}.
From the proof of \cite[Theorem 4.3]{LS16} we know that $h_*$ is normally stable when
$r > 1$. 
The conclusion thus follows from Theorem~\ref{thm:normally-stable}.

\medskip
{\bf (c)}
It follows from \cite[Theorem 4.3(b)]{LS16} that $h_*$ is unstable in the topology of
$bc^{3 + \alpha}_{per}(\mathcal{C}_r).$ Instability in $bc^{1+\alpha}_{per}(\mathcal{C}_r)$ 
then follows from Proposition~\ref{pro:stability}.
\end{proof}

\medskip
\subsection{Axisymmetric Setting}

We turn now to consider \eqref{SDFlow} acting on the scale of axisymmetric
little--H\"older spaces $bc^{k + \alpha}_{sym}(\mathcal{C}_r), k \in \mathbb{N},$
defined as the subspace of functions $h \in bc^{k + \alpha}(\mathcal{C}_r)$ exhibiting symmetry
around the $x$--axis; i.e.
\[
	h(x,\theta_1)=h(x,\theta_2) \qquad \text{for all $x \in \R$ and $\theta_1, \theta_2 \in \mathbb{T}$.}
\]
These functions naturally coincide with surfaces $\Gamma(h)$ which are symmetric
about the central $x$--axis, as considered in \cite{LS13}; although we relax the setting
slightly by not enforcing axial--periodicity for our well--posedness result.

For all such functions with sufficient regularity, it follows that $\partial_\theta h \equiv 0$ and 
the application of the evolution operator $G$ to $h \in bc^{4+\alpha}_{sym}(\mathcal{C}_r)$ 
produces the simplified expression
\begin{equation}\label{ASD}
	G(h) = \frac{1}{(r+h)} \, \partial_x \left[ \frac{(r + h)}{\sqrt{1 + h_x^2}} \; 
		\partial_x \left(\frac{1}{(r+h) \sqrt{1 + h_x^2}} - 
		\frac{h_{xx}}{(1 + h_x^2)^{\frac{3}{2}}} \right) \right].
\end{equation}
In fact, a complete expansion of individual terms for the operator $G$ is provided in 
\cite[Equations (2.1)--(2.3)]{LS13} from which we deduce
\[
	A(h) = \frac{1}{(1 + h_x^2)^2} \; \partial_x^4 \qquad \qquad
	F_1(h) = \frac{h_x^2}{(r+h)^3 (1 + h_x^2)}
\]
and
\[
\begin{aligned}
	F_2(h) &= \frac{2 h_x (1+h_x^2)}{(r+h)(1+h_x^2)^3} \; h_{xxx} +
		\frac{-6 (r+h)h_x}{(r+h)(1+h_x^2)^3} \; h_{xx}h_{xxx}\\
		&\qquad +
		\frac{h_x^2 - 1}{(r+h)^2 (1+h_x^2)^2} \; h_{xx} + 
		\frac{6 h_x^2-1}{(r+h)(1+h_x^2)^3} \; h_{xx}^2 +
		\frac{3-15 h_x^2}{(1+h_x^2)^4} \; h_{xx}^3 \ ,
\end{aligned}
\]
where we can explicitly observe the structure of \eqref{F2Structure}.

To apply the results of Section~\ref{subsec:AxiallyDefinable} to the axisymmetric setting,
it suffices to note that the property of axisymmetry is preserved by \eqref{SDFlow}.
This claim is clear from a purely geometric perspective, since the evolution equation 
\eqref{SDFlow} is completely determined by the geometry of the surfaces $\Gamma(h(t)),$
and axisymmetry of the surface imparts the same symmetry onto the geometric structure.
However, one can also confirm preservation of axisymmetry analytically by confirming that
$G$ commutes with the azimuthal shift operators $T_\phi,$ for  $\phi \in \mathbb{T};$ 
defined by
\[
	T_\phi (h(x,\theta)) := h(x, \theta+\phi) \qquad \text{for $(x,\theta) \in \mathcal{C}_r.$}
\]
Indeed, we note that axisymmetry of $h$ can be characterized by the property that $T_\phi h = h$ 
on $\mathcal{C}_r$ for all $\phi \in \mathbb{T}.$ Then, by direct computation one
confirms that $T_\phi G(h) = G(T_\phi h)$.
From here we apply Theorems~\ref{thm:SDFlowWellPosed} and
\ref{thm:SDFlowStability}, along with an argument similar to the proof of \cite[Proposition 3.6]{LS16}
to produce the following extensions of \cite[Propositions 2.2 and 2.3]{LS13}.

\medskip
\begin{thm}[Well--Posedness] \label{thm:ASDWellPosed}
	Fix $\varepsilon > 0$ and $\alpha \in (0,1)$. 
\begin{itemize}
	\item[{\bf (a)}] For each admissible axisymmetric initial value 
	\[
		h_0 \in V_{\mu,sym} := bc^{1 + \alpha}_{sym}(\mathcal{C}_r) \cap [h > \varepsilon - r],
	\]
	there exists a unique maximal solution to \eqref{SDFlow} with the additional property
	\[
		h(\cdot, h_0) \in C([0,t^+(h_0)), bc^{1 + \alpha}_{sym}(\mathcal{C}_r)) 
			\cap C((0,t^+(h_0)), bc^{4 + \alpha}_{sym}(\mathcal{C}_r)).
	\]
	Further, the map $[(t, h_0) \mapsto h(t,h_0)]$
	defines a semiflow on $V_{\mu,sym}$ which is analytic for $t > 0$ and Lipschitz
	continuous for $t \ge 0$.
	
	\medskip
	\item[{\bf (b)}] Moreover, if the solution $h(\cdot, h_0)$ satisfies:
	\begin{itemize}
		\vspace{1mm}
		\item[(i)] $h(\cdot, h_0) \in UC(J(h_0),bc^{1+\alpha}_{sym}(\mathcal{C}_r)),$ and
		\item[(ii)] there exists $M > 0$ so that, for all $t \in J(h_0) := [0,t^+(h_0))$,
		\begin{itemize}
			\vspace{1mm}
			\item[(ii.a)] $h(t,h_0)(p) \ge 1/M - r$ 
				for all $p \in \mathcal{C}_r,$ and
			\item[(ii.b)] $| h(t,h_0) |_{bc^{1 + \alpha}(\mathcal{C}_r)} \le M$,
		\end{itemize}
	\end{itemize}
	then it holds that $t^+(h_0) = \infty$ and $h(\cdot, h_0)$ is a 
	global solution of \eqref{SDFlow}.
\end{itemize}
\end{thm}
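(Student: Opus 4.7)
The plan is to obtain Theorem~\ref{thm:ASDWellPosed} as a direct corollary of Theorem~\ref{thm:SDFlowWellPosed}, using the invariance of axisymmetry under the semiflow; this is precisely the strategy indicated by the paragraph preceding the theorem, where the authors stress that $G$ commutes with the azimuthal shifts $T_\phi$. The key point is therefore to run a symmetry-reduction argument rather than repeat the fixed-point construction.

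First, I would observe that $V_{\mu,sym} \subset V_\mu$, so for any $h_0 \in V_{\mu,sym}$ Theorem~\ref{thm:SDFlowWellPosed}(a) already produces a unique maximal solution $h(\cdot, h_0)$ with the stated continuity and regularity properties, together with analyticity for $t>0$ and Lipschitz continuity for $t \ge 0$ of the associated semiflow on $V_\mu$. What remains is to upgrade these to the axisymmetric setting, which reduces to showing that the orbit stays in $V_{\mu,sym}$ so that the restricted map $[(t,h_0) \mapsto h(t,h_0)]$ inherits all of the regularity.

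To verify invariance, fix $\phi \in \mathbb{T}$ and set $v(t) := T_\phi h(t, h_0)$. Since $T_\phi$ is an isometric isomorphism on each little-H\"older space $bc^{k+\alpha}(\mathcal{C}_r)$ and commutes with $\partial_t$, and since the identity $T_\phi G = G\, T_\phi$ holds (which is immediate from \eqref{SDOperator} because $T_\phi$ commutes with $\partial_x$, $\partial_\theta$, and with pointwise algebraic combinations of $h, \partial^1 h, \partial^2 h, \partial^3 h$ and $r+h$), $v$ satisfies the same initial value problem \eqref{SDFlow} with initial datum $T_\phi h_0 = h_0$. The uniqueness clause of Theorem~\ref{thm:SDFlowWellPosed}(a) then forces $v = h(\cdot,h_0)$, i.e.\ $T_\phi h(t,h_0) = h(t,h_0)$ for every $\phi \in \mathbb{T}$ and every $t \in [0,t^+(h_0))$. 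Hence $h(t,h_0) \in bc^{1+\alpha}_{sym}(\mathcal{C}_r)$, and by the instantaneous regularization also $h(t,h_0) \in bc^{4+\alpha}_{sym}(\mathcal{C}_r)$ for $t>0$. The Lipschitz continuity and analyticity of the semiflow on $V_{\mu,sym}$ then follow because these properties already hold on $V_\mu$ and the topology on $V_{\mu,sym}$ is the one induced by $V_\mu$.

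Part (b) is essentially automatic: the stated conditions (i) and (ii) are exactly those of Theorem~\ref{thm:SDFlowWellPosed}(b) applied to the embedded problem on $V_\mu$, so that theorem yields $t^+(h_0)=\infty$, and the axisymmetry of the maximal solution is preserved by the argument above. The only place one has to be mildly careful is in checking that the commutation relation $T_\phi G = G\, T_\phi$ really holds for $h$ with $bc^{1+\alpha}$ regularity --- I expect this to be the main bookkeeping step, but it is a direct inspection of the operators $A$, $F_1$, $F_2$ listed after \eqref{SDOperator}, each of which is a pointwise algebraic expression in $h$ and its $x$- and $\theta$-derivatives. No genuine analytic obstacle arises; the proof is a symmetry reduction together with an invocation of Theorem~\ref{thm:SDFlowWellPosed}.
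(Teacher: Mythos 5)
Your proposal follows the paper's own route: the paper likewise reduces the axisymmetric theorem to Theorem~\ref{thm:SDFlowWellPosed} by noting that $G$ commutes with the azimuthal shifts $T_\phi$ (equivalently, by the geometric argument that surface diffusion preserves symmetries of the surface), so that axisymmetry of the initial datum propagates along the flow and the conclusions restrict to the invariant subspace $V_{\mu,sym}$. The uniqueness-based invariance argument you write out --- setting $v:=T_\phi h(\cdot,h_0)$, checking $v$ solves the same IVP with $v(0)=h_0$, and invoking uniqueness --- is exactly the ``argument similar to the proof of \cite[Proposition~3.6]{LS16}'' the paper alludes to, and your handling of part~(b) via the inclusion $bc^{1+\alpha}_{sym}\subset bc^{1+\alpha}$ is also the intended one.
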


With the additional assumption of periodicity along the $x$--axis, we likewise
define $bc_{symm,per}^{k+\alpha}(\mathcal{C}_r)$, $k \in \mathbb{N}$, as the subspace
of periodic functions $h \in bc^{k + \alpha}_{symm}(\mathcal{C}_r)$, with
$h(x \pm 2\pi, \theta) = h(x,\theta)$ for all $(x,\theta) \in \mathcal{C}_r$. 

\medskip
\begin{thm}[Global existence, stability and instability of cylinders]\label{thm:ASDFlowStability}
Fix $\alpha \in (0,1)$.
\begin{itemize}
\vspace{1mm}
\item[{\bf (a)}] {\bf (Global Existence)} Let $h_0 \in V_{\mu, symm, per}$ and suppose there exists
a constant $M > 0$ so that, for all $t \in J(h_0)$,
\begin{itemize}
	\vspace{1mm}\item[(i)] $h(t,h_0)(p) \ge 1/M - r$ for all $t \in J(h_0) = [0,t^+(h_0))$, 
		$p \in \mathcal{C}_r$, and
	\vspace{1mm}\item[(ii)] $| h(t,h_0) |_{bc^{1+\delta}} \le M$ for all $t \in [\tau, t^+(h_0))$, 
		for some $\tau \in \dot{J}(h_0)$ and $\delta \in (\alpha,1)$,
\end{itemize}\vspace{1mm}
then $t^+(h_0) = \infty,$ so that $h(\cdot, h_0)$ is a global solution.
\vspace{1mm}
\item[{\bf (b)}] {\bf (Stability)} Fix $r > 1$ and $\bar \mu \in (0,1).$ 
	There exists a positive constant $\delta > 0$ 
	such that, given any admissible periodic axisymmetric perturbation
	\[
		h_0 \in V_{\mu, sym, per} := bc_{sym, per}^{1+\alpha}(\mathcal{C}_r) \cap [h > -r]
	\]
	with $| h_0 |_{bc^{1+\alpha}(\mathcal{C}_r)} < \delta$, the solution $h(\cdot, h_0)$ exists
	globally in time and converges to some $\bar h \in \mathcal{M}_{cyl}$ at an 
	exponential rate, in the topology of $E_{\bar\mu}.$ 
\vspace{1mm}
\item[{\bf (c)}] {\bf (Instability)} For $0<r<1$ the function $h_* \equiv 0$ is unstable in the 
	topology of $bc_{per}^{1+\alpha}(\mathcal{C}_r).$
\end{itemize}
\end{thm}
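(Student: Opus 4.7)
The plan is to derive Theorem~\ref{thm:ASDFlowStability} as a direct restriction of Theorem~\ref{thm:SDFlowStability} to the axisymmetric subspace $bc^{k+\alpha}_{sym, per}(\mathcal{C}_r)$, exploiting the fact that the flow preserves axisymmetry. The crucial preliminary observation is that $G$ commutes with the azimuthal shift operators $T_\phi$, as already noted in the paragraph preceding Theorem~\ref{thm:ASDWellPosed}. Since axisymmetry of $h$ is exactly the condition $T_\phi h = h$ for all $\phi \in \mathbb{T}$, uniqueness of solutions (Theorem~\ref{thm:MainResult}) applied to $T_\phi h(\cdot, h_0)$ and $h(\cdot, T_\phi h_0) = h(\cdot, h_0)$ yields that the subspace $bc^{k+\alpha}_{sym, per}(\mathcal{C}_r)$ is invariant under the semiflow. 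Consequently, all of the abstract machinery applied in the periodic setting transfers verbatim to the axisymmetric-periodic setting.

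For part (a), global existence follows immediately from Theorem~\ref{thm:SDFlowStability}(a): the hypotheses (i)--(ii) are identical, and the invariance of axisymmetry ensures that the global solution produced there lies in $V_{\mu, sym, per}$ whenever $h_0$ does. Here I would also invoke the compactness of the embedding $bc^{4+\alpha}_{sym, per}(\mathcal{C}_r) \hookrightarrow bc^{\alpha}_{sym, per}(\mathcal{C}_r)$ (inherited from the periodic setting) so that Theorem~\ref{thm:MainResult}(c), with condition (i.a), applies.

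For part (b), I would verify that $h_* \equiv 0$ remains normally stable when $(A, F_1, F_2)$ are restricted to $V_{\mu, sym, per}$. Since the proof of \cite[Theorem 4.3]{LS16} proceeds via Fourier analysis of the linearization $DG(0)$ and identifies the critical modes and tangent space $N(A_0)$ coming from translations of the cylinder in the $(y,z)$-plane together with changes of radius, one only needs to check that these modes lie within $bc^{k+\alpha}_{sym, per}$. The translation and radial-perturbation modes are indeed azimuthally constant, so the manifold of equilibria $\mathcal{M}_{cyl}$ (parameterized by $(\bar y, \bar z, \bar r)$) intersects the axisymmetric subspace in a submanifold whose tangent space at $h_*$ coincides with $N(A_0)$ restricted to the subspace. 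Then Theorem~\ref{thm:normally-stable} applies, yielding exponential convergence in $E_{\bar\mu}$.

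For part (c), the main subtlety is to exhibit axisymmetric periodic perturbations realizing the Plateau--Rayleigh instability. I expect this to be the main obstacle only superficially: the unstable eigenmodes exhibited in \cite[Theorem 4.3(b)]{LS16} are of the form $\cos(kx)$ with $k$ chosen in the Fourier-admissible range corresponding to $r < 1$, and these are manifestly axisymmetric and $2\pi$-periodic (for appropriate $k$). Hence instability of $h_*$ in $bc^{3+\alpha}_{sym, per}(\mathcal{C}_r)$ follows from the construction there, and Proposition~\ref{pro:stability} upgrades this to instability in the weaker topology $bc^{1+\alpha}_{sym, per}(\mathcal{C}_r)$, completing the proof.
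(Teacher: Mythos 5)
Your overall strategy is exactly the paper's: the theorem is obtained by restricting Theorems~\ref{thm:SDFlowWellPosed} and~\ref{thm:SDFlowStability} to the axisymmetric subspace, using that $G$ commutes with the azimuthal shifts $T_\phi$ (so that uniqueness propagates $T_\phi h_0 = h_0$ to $T_\phi h(t,h_0) = h(t,h_0)$), and then checking that the normal-stability hypotheses and the unstable modes survive the restriction. Parts (a) and (c) are handled correctly: in particular the unstable Plateau--Rayleigh eigenmodes depend only on $x$, so they live in $bc^{3+\alpha}_{sym,per}$, and Proposition~\ref{pro:stability} downgrades the topology as you describe.

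In part (b), however, there is a factual error in the stated reasoning, although the conclusion you reach is correct. The $(y,z)$-translation directions tangent to $\mathcal{M}_{cyl}$ at $h_* \equiv 0$ are \emph{not} azimuthally constant. Writing the shifted cylinder $C(\epsilon,0,r)$ as a graph over $\mathcal{C}_r$ and differentiating in $\epsilon$ at $\epsilon = 0$ gives the tangent direction $\cos\theta$ (similarly $\sin\theta$ for the $z$-shift). These modes are therefore excluded from $bc^{k+\alpha}_{sym,per}(\mathcal{C}_r)$, and the only kernel direction surviving in the axisymmetric subspace is the constant function, corresponding to radial dilation. The correct justification for normal stability under restriction is that $N(A_0)$ and $T_{h_*}\mathcal{M}_{cyl}$ shrink \emph{compatibly} when intersected with the axisymmetric subspace: both reduce to the one-dimensional family of concentric cylinders parameterized by radius. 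Since the axisymmetric subspace is $A_0$-invariant, $\sigma(-A_0\big|_{sym}) \subset \sigma(-A_0)$, the direct sum $N(A_0)\oplus R(A_0) = E_0$ restricts to a direct sum in $E_0^{sym}$, and conditions (i)--(iv) of Theorem~\ref{thm:normally-stable} hold for the restricted operator with $m = 1$. Your sentence as written would suggest the full three-dimensional manifold survives the restriction, which is not so; but since the kernel shrinks by exactly the same modes, the hypotheses remain in force and the conclusion stands.
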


\medskip
\subsection{Surfaces Near Compact Hypersurfaces}

We conclude the paper by looking at the flow of surfaces parameterized 
over a fixed reference manifold and extend results in \cite{EMS98}. 
In particular, let $\Sigma$ denote a smooth, closed, 
compact, immersed, oriented hypersurface 
in $\R^n$, and let $\nu_\Sigma$ be a unit normal vector field on $\Sigma$, 
compatible with the chosen orientation. It follows that 
there exists a constant $a > 0$ and an open atlas 
$\{ U_\ell :\ell \in \mathcal{L} \}$ for $\Sigma$ so that
\begin{equation}\label{TubeNHood}
	X_\ell : U_\ell \times (-a,a) \to \R^n, \qquad 
		X_\ell(p,r) := p + r \nu_\Sigma(p),
\end{equation}
is a smooth diffeomorphism onto the range $R_\ell := im(X_\ell),$ for $\ell \in \mathcal{L}.$
We capture the evolution of surfaces that are $C^1$--close to $\Sigma$ via
time--dependent height functions $h : \R^+ \times \Sigma \to (-a,a).$
In particular, to $h(t) := h(t, \cdot)$ we associate the surface
\[
	\Gamma(h(t)) := \bigcup_{\ell \in \mathcal{L}} \{ X_\ell (p, h(t,p)) : p \in U_\ell \},
\]
which is parametrized by the mapping
\begin{equation}\label{Psi}
	\Psi_{h(t)} : \Sigma \to \R^n, \qquad
		\Psi_{h(t)}(p) := p + h(t,p) \nu_\Sigma(p).
\end{equation}

As in the previous settings, we let $\alpha \in (0,1)$ and work in spaces of little--H\"older
continuous functions
\[
	E_0 := bc^{\alpha}(\Sigma), \quad E_\mu = bc^{1+\alpha}(\Sigma), \quad
	E_\beta = bc^{3+\alpha}(\Sigma), \quad E_1 := bc^{4 + \alpha}(\Sigma),
\]
with $\mu = 1/4$ and $\beta = 3/4$. 
Further, we define
\[
	V_\mu := bc^{1+\alpha}(\Sigma) \cap [|h|_{C(\Sigma)} < a].
\]

To express the equations for surface diffusion flow of $\Gamma(h(t))$ as
an evolution equation acting on the height functions $h$, we direct the 
reader to \cite[Section 2]{EMS98} and \cite[Section 5]{S15} where details are 
given for pulling back the governing equation
$V_{\Gamma} = \Delta_{\Gamma} \mathcal{H}_{\Gamma},$
defined on $\Gamma(h(t)),$ to an equivalent equation on the reference manifold $\Sigma$.
We thus arrive at an expression
\begin{equation}\label{SDFlowOnSigma}
\begin{cases}
	h_t(t,p) = [G(h(t))](p) &\text{for $t > 0, \; p \in \Sigma$},\\
	h(0) = h_0 &\text{on $\Sigma$},
\end{cases}
\end{equation}
where the evolution operator $G$ takes the form (c.f. \cite[Section 5]{S15})
\begin{equation}\label{GOnSigma}
	G(h) := -\frac{1}{\beta(h,\partial^1 h)} \Delta_h \mathcal{H}_h.
\end{equation} 

Utilizing expressions given in \cite[Sections 3.2--3.5]{PS13} or  \cite[Section 2.2]{PS16}, 
we expand \eqref{GOnSigma} and confirm properties {\bf (H1), (H2)} and \eqref{A-F1-F2}.
For the structure of the Laplace--Beltrami operator in local coordinates,
we have (employing the standard summation convention over repeated instances 
of $i, j, k$ taking values from 1 to $(n-1)$ --- the dimension of the manifold)
\begin{equation}\label{LaplaceStructure}
	\Delta_h \varphi = a^{ij}(h,\partial^1 h) \partial_i \partial_j \varphi_* 
		+ b^k(h, \partial^1 h, \partial^2 h) \partial_k \varphi_*,
\end{equation}
where $\varphi$ is a scalar function on $\Gamma(h)$ and $\varphi_* := \Psi_h^* \varphi$
its pull--back to $\Sigma$ through the parameterization $\Psi_h$.
The coefficient functions $a^{ij}$ and $b^k$ are expressed as
\[
\begin{aligned}
	a^{ij}(h, \partial^1 h) &= \Big( P_{\Gamma(h)} M_0(h) \tau_\Sigma^i \Big|
		P_{\Gamma(h)} M_0(h) \tau_\Sigma^j \Big) \qquad \text{and}\\
	b^k(h, \partial^1 h, \partial^2 h) &= \Big( \partial_i \big( M_0(h) P_{\Gamma(h)}\big)
		P_{\Gamma(h)} M_0(h) \tau_\Sigma^i \Big| \tau_\Sigma^k \Big),		
\end{aligned}
\]
where $\tau^\Sigma_i \big|_p$ denote elements of a basis for the tangent space $T_p \Sigma$ 
to a point $p \in \Sigma$, while $\tau_\Sigma^i \big|_p$ make up a corresponding basis for the 
dual of $T_p \Sigma,$ and $\big( \cdot \big| \cdot \big)$ is the inner product in Euclidean space $\R^n$.
Further, $M_0(h) := (I - h L_\Sigma)^{-1}$ depends upon $h$ and the Weingarten tensor $L_\Sigma$ 
on $\Sigma$ (i.e. no derivatives of $h$ appear in $M_0(h)$) 
and 
\[
	P_{\Gamma(h)} :=  I - \nu_{\Gamma(h)} \otimes \nu_{\Gamma(h)}
\]
projects onto the tangent space $\Gamma(h)$. Here the normal vector to $\Gamma(h)$ is
\[
	\nu_{\Gamma(h)} = \beta(h,\partial^1 h) ( \nu_{\Sigma} - M_0(h) \nabla_\Sigma h),
\]
and hence $P_{\Gamma(h)}$ only depends upon first--order derivatives of $h$.
Therefore, second--order derivatives of $h$ only appear in $b^k$ when 
the derivative $\partial_i$ acts on $P_{\Gamma(h)}$ in the first term of the inner product. 
With no further factors of $\partial^2 h$ appearing in the expression, it follows that 
$\partial^2 h$ only appears linearly in the functions $b^k(h, \partial^1 h, \partial^2 h).$ 

By \cite[Section 2.2]{PS16} or \cite[Section 3.5]{PS13}, the mean curvature function has the 
following structure in local coordinates,
\begin{equation}\label{CurvatureStructure}
	\mathcal{H}_h = c^{ij}(h, \partial^1 h) \partial_i \partial_j h + d(h, \partial^1 h).
\end{equation}
Thus, applying \eqref{LaplaceStructure} to \eqref{CurvatureStructure}, one confirms that $G$
exhibits the quasilinear structure 
\[
	G(h) = -A(h, \partial^1 h) h + F_1(h, \partial^1 h) +
		F_2(h, \partial^1 h, \partial^2 h, \partial^3 h).
\]
Considering condition {\bf (H1)}, note that, in every local chart $U_\ell$, the principal symbol
$\sigma[A(h)]_\ell$ coincides with the expression given for the principal symbol 
$\hat{\sigma}[P(h)]_\ell$ in \cite[Section 5]{S15}. Thus, we have
\[
	A(h) \in \mathcal{M}_\mu(E_1,E_0) \qquad \text{for all $h \in V_\mu,$}
\]
and we likewise conclude
\begin{equation}\label{RegularityOnSigma}
\begin{aligned}
	(A,F_1) &\in C^{\omega} \big( V_\mu, \mathcal{M}_\mu(E_1, E_0) \times E_0 \big)\\
	F_2 &\in C^{\omega} \big( V_\mu \cap E_\beta, E_0 \big).
\end{aligned}
\end{equation}

Considering condition {\bf (H2)}, by the argument in Section~\ref{subsec:AxiallyDefinable},
it suffices to show that $F_2$ exhibits the same structure as \eqref{F2Structure}. 
Thus, applying \eqref{LaplaceStructure} to \eqref{CurvatureStructure} (noting
that \eqref{CurvatureStructure} is the pulled back expression of mean curvature), we  
first consider the four scenarios where third--order derivatives arise in $G(h)$, namely:
\begin{itemize}
	\item $a^{ij}(h,\partial^1 h) \, \partial_i c^{kl}(h, \partial^1 h) \, \partial_j \partial_k \partial_l h$,
	\vspace{1mm}\item $a^{ij}(h,\partial^1 h) \, \partial_i \partial_j c^{kl}(h, \partial^1 h)  \, \partial_k \partial_l h$,
	\vspace{1mm}\item $b^k(h, \partial^1 h, \partial^2 h) \, c^{ij}(h, \partial^1 h) \, \partial_k \partial_i \partial_j h$, and
	\vspace{1mm}\item $a^{ij}(h,\partial^1 h) \, \partial_i \partial_j d(h, \partial^1 h)$.
\end{itemize}
In all such scenarios, when $\partial^3 h$ is produced it appears linearly and it multiplies factors of 
$\partial^2$ that appear at most linearly. 
Next, considering all cases within which second--order derivatives arise in $G(h)$ --- without accompanying
factors of $\partial^3 h$ --- one likewise confirms that at most cubic factors 
of $\partial^2 h$ appear. Therefore, we conclude that conditions {\bf (H1), (H2)} and \eqref{A-F1-F2}  
all hold for \eqref{SDFlowOnSigma}, and we thus produce the following extension of 
\cite[Theorem 2.2]{EMS98} by application of Theorem~\ref{thm:MainResult}. 
Note that the embeddings $bc^{k+\alpha}(\Sigma) \hookrightarrow bc^{\alpha}(\Sigma)$
are compact here, since the domain $\Sigma$ is itself compact.

\begin{thm}[Well--Posedness] \label{thm:SDOnSigmaWellPosed}
	Fix $\alpha \in (0,1)$ and $a > 0$. Let $\Sigma$ be a smooth, closed, compact, immersed, oriented
	hypersurface in $\R^n$ on which there exists an open atlas $\{ U_\ell: \ell \in \mathcal{L}\}$
	where $X_\ell: \big[(p,r) \mapsto p + r \nu_\Sigma(p)\big]: \Sigma \times (-a,a) \to \R^n$ is
	a smooth diffeomorphism onto $R_\ell := im(X_\ell)$, for $\ell \in \mathcal{L}$. 
\begin{itemize}
	\item[{\bf (a)}] For each admissible initial value 
	\[
		h_0 \in V_{\mu} := bc^{1 + \alpha}(\Sigma) \cap [ |h|_{C(\Sigma)} < a],
	\]
	there exists a unique maximal solution to \eqref{SDFlowOnSigma} with the additional property
	\[
		h(\cdot, h_0) \in C([0,t^+(h_0)), bc^{1 + \alpha}(\Sigma)) 
			\cap C((0,t^+(h_0)), bc^{4 + \alpha}(\Sigma)).
	\]
	Further, the map $[(t, h_0) \mapsto h(t,h_0)]$
	defines a semiflow on $V_{\mu}$ which is analytic for $t > 0$ and Lipschitz
	continuous for $t \ge 0$.
	
	\medskip
	\item[{\bf (b)}] Moreover, if there exists $M > 0$ and $\delta \in (\alpha, 1)$ so that
		\begin{itemize}
			\vspace{1mm}
			\item[(i)] $|h(t,h_0)(p)| \le a - 1/M,$ 
				for all $p \in \Sigma$ and $t \in J(h_0)$, and
			\vspace{1mm}\item[(ii)] $| h(t,h_0) |_{bc^{1 + \delta}(\Sigma)} \le M$,
				for all $t\in [\tau, t^+(h_0)),$ and some $\tau \in (0, t^+(h_0))$,
				\vspace{1mm}
		\end{itemize}
	then it holds that $t^+(h_0) = \infty$ and $h(\cdot, h_0)$ is a 
	global solution of \eqref{SDFlowOnSigma}.
\end{itemize}
\end{thm}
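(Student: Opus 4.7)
The plan is to obtain Theorem~\ref{thm:SDOnSigmaWellPosed} as a direct corollary of Theorem~\ref{thm:MainResult}, once the abstract hypotheses have been collected for the geometric setting. The verifications are essentially laid out in the paragraphs preceding the theorem: \eqref{RegularityOnSigma} together with the identification of the principal symbol of $A(h)$ with the one obtained in \cite[Section~5]{S15} in each local chart provides {\bf (H1)}, as well as $A(h)\in\mathcal{M}_\mu(E_1,E_0)$ and the smoothness hypothesis \eqref{A-F1-F2} needed for the stability part later on. The structural condition {\bf (H2)} reduces, via the local expansions \eqref{LaplaceStructure}--\eqref{CurvatureStructure}, to exactly the interpolation bookkeeping already performed for \eqref{F2Structure} in Section~\ref{subsec:AxiallyDefinable}, because the four scenarios in which $\partial^3 h$ can occur in $G(h)$ all produce $\partial^3 h$ linearly with at most linear accompanying factors of $\partial^2 h$, while the remaining contributions contain at most cubic factors of $\partial^2 h$.

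Assuming {\bf (H1)}--{\bf (H2)}, part~(a) then follows at once: Theorem~\ref{thm:MainResult}(a)--(b) delivers the unique maximal solution together with the asserted regularity $h(\cdot,h_0)\in C([0,t^+),E_\mu)\cap C((0,t^+),E_1)$, and Corollary~\ref{SemiflowRegularity} upgrades the Lipschitz estimate \eqref{Lip-mu} to a locally Lipschitz continuous semiflow on $V_\mu$. For the analyticity claim for $t>0$, the plan is to imitate the bootstrap argument sketched in the remark following Theorem~\ref{thm:SDFlowWellPosed}: for any $\tau>0$, $h(\tau,h_0)$ lies in the open subset $V_\mu\cap E_\beta$ of $E_\beta$, and on that stronger scale \eqref{RegularityOnSigma} makes the right-hand side of \eqref{SDFlowOnSigma} locally analytic into $E_0$, so \cite[Theorem~6.1]{CS01} yields analytic dependence for $t>\tau$; continuity in $E_\mu$ supplied by part~(a) then transfers the conclusion back to $V_\mu$.

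For part~(b) the new ingredient is that compactness of $\Sigma$ makes the embedding $E_1=bc^{4+\alpha}(\Sigma)\hookrightarrow bc^{\alpha}(\Sigma)=E_0$ compact, so the alternative hypothesis~(i.a) of Theorem~\ref{thm:MainResult}(c) is available. Using the standard identification $(E_0,E_1)^0_{\theta,\infty}=bc^{\alpha+4\theta}(\Sigma)$ for non-integer exponents, the assumption $|h(t,h_0)|_{bc^{1+\delta}(\Sigma)}\le M$ with $\delta\in(\alpha,1)$ is precisely boundedness of the orbit in $E_\theta$ for $\theta=(1+\delta-\alpha)/4\in(\mu,1)$, which is (i.a). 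To close the application of Theorem~\ref{thm:MainResult}(c), I would then derive the condition ${\rm dist}_{E_\mu}(h(t,h_0),\partial V_\mu)\ge\eta$ from hypothesis~(i) via the continuous embedding $E_\mu\hookrightarrow C(\Sigma)$: the pointwise margin $a-1/M$ forces a uniform positive $E_\mu$-distance from $h(t,h_0)$ to the set $\{|h|_{C(\Sigma)}\ge a\}$.

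The step requiring the most care is the shape verification of $F_2$ entering {\bf (H2)}. Composing $\Delta_h$ from \eqref{LaplaceStructure} with $\mathcal{H}_h$ from \eqref{CurvatureStructure}, one must check that each term producing $\partial^3 h$ does so linearly, and that no term brings in more than a cubic power of $\partial^2 h$; the decisive observation is that in \eqref{LaplaceStructure} the coefficient $b^k(h,\partial^1 h,\partial^2 h)$ depends on $\partial^2 h$ only linearly, because its sole $\partial^2 h$-contribution comes from $\partial_i P_{\Gamma(h)}$ and $P_{\Gamma(h)}$ itself involves only $\partial^1 h$. Once this pattern is pinned down, the catalogue of critical and subcritical Lipschitz bounds assembled in Section~\ref{subsec:AxiallyDefinable} transfers verbatim, and in particular reconfirms that $bc^{1+\alpha}(\Sigma)$ is a critical space for \eqref{SDFlowOnSigma}.
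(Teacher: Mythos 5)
Your proposal matches the paper's own treatment: the theorem is obtained as a direct corollary of Theorem~\ref{thm:MainResult} after verifying \textbf{(H1)}--\textbf{(H2)} and \eqref{A-F1-F2} via the local expansions \eqref{LaplaceStructure}--\eqref{CurvatureStructure}, with compactness of $\Sigma$ furnishing the compact embedding $E_1\hookrightarrow E_0$ that makes condition (i.a) of Theorem~\ref{thm:MainResult}(c) available for part~(b). Your bookkeeping $\theta=(1+\delta-\alpha)/4>\mu$, the reduction of hypothesis~(i) to a uniform $E_\mu$-distance from $\partial V_\mu$ via $E_\mu\hookrightarrow C(\Sigma)$, and the analyticity bootstrap through $E_\beta$ and \cite[Theorem~6.1]{CS01} all agree with what the paper asserts or leaves implicit.
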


\begin{remark}
	{\bf (a)}
	We note that the global existence result in Theorem~\ref{thm:SDOnSigmaWellPosed}(b) is
	limited, as it fails to account for the possibility of updating the reference manifold $\Sigma$ 
	as $\Gamma(h(t))$ is leaving the tubular neighborhood, but this result is sufficient for 
	considerations of stability/instability when $\Sigma$ is an equilibrium.
	
	\medskip
	{\bf (b)}
	Further regularity of the surfaces $\Gamma(h(t))$ have been shown in certain settings.
	In particular, when $\Sigma$ is additionally assumed to be a smooth embedded surface, 
	it follows from \cite[Theorem 5.2]{S15}, and instantaneous regularization of solutions, 
	that $\Gamma(h(t))$ is also smooth for all $t \in \dot{J}(h_0)$. 
	Likewise, if $\Sigma$ is real analytic and embedded, then $\Gamma(h(t))$
	is also real analytic.
\end{remark}

\medskip
\subsection{Stability of Euclidean Spheres}

In the particular case that $\Sigma$ is a Euclidean sphere, the function $h_* \equiv 0$ is 
normally stable in $E_1$ by \cite[Section 3]{EMS98} and we thus conclude the following
extension of \cite[Theorem 1.2]{EMS98}. Note that our result shows stability of spheres 
under surface diffusion flow with control on only first derivatives of perturbations.

\begin{thm}\label{thm:Spheres}
	Fix $\alpha \in (0,1),$ $\bar\mu \in (0, 1).$ Let $\Sigma$ be a Euclidean sphere in 
	$\R^n$ and choose $a > 0$ so that the mapping $[(p,r) \mapsto p + r\nu_{\Sigma}(p)]$ is
	a diffeomorphism on $\Sigma \times (-a,a)$. 
	There exists a constant $\delta > 0$ such that, given any admissible
	perturbation $\Gamma(h_0)$ for
	$h_0 \in V_\mu$
	with $|h_0|_{bc^{1+\alpha}(\Sigma)} < \delta$, the solution $h(\cdot,h_0)$ exists
	globally in time and converges to some $\bar h \in \mathcal{M}_{sph}$ at an exponential
	rate, in the topology of $E_{\bar \mu}.$ Here $\mathcal{M}_{sph}$ denotes the family
	of all spheres which are sufficiently close to $\Sigma$ in $\R^n.$
\end{thm}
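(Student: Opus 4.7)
The plan is to apply the generalized principle of linearized stability, Theorem~\ref{thm:normally-stable}, to the equilibrium $h_* \equiv 0$ of \eqref{SDFlowOnSigma} in the case $\Sigma$ is a Euclidean sphere. The structural prerequisites for this theorem are essentially already in place: the verification of {\bf (H1)}, {\bf (H2)}, and \eqref{A-F1-F2} carried out for arbitrary compact reference hypersurfaces in the proof of Theorem~\ref{thm:SDOnSigmaWellPosed} transfers without change to the sphere, using \eqref{RegularityOnSigma}. Thus, only the four normal-stability conditions (i)--(iv) of Theorem~\ref{thm:normally-stable} need to be verified at $h_* \equiv 0$.

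First, I would identify the local manifold of equilibria. Any sphere in $\R^n$ sufficiently close to $\Sigma$ is again parameterizable as a graph $\Gamma(\bar h)$ over $\Sigma$ for a unique $\bar h \in bc^{4+\alpha}(\Sigma)$, and since spheres have constant mean curvature, $\Delta_{\Gamma(\bar h)}\mathcal H_{\Gamma(\bar h)} = 0$, so $G(\bar h) = 0$. This provides an $(n+1)$-dimensional $C^1$-submanifold $\mathcal{M}_{sph} \subset V_\mu \cap E_1$ (parameters: $n$ for the center and $1$ for the radius) passing through $h_*$; this gives hypothesis (i) of Theorem~\ref{thm:normally-stable}. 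For hypotheses (ii)--(iv), I would invoke the spectral analysis carried out in \cite[Section 3]{EMS98}: the linearization $A_0$ at $h_*$, computed from \eqref{A0}, coincides with the operator analyzed there, whose kernel is shown to be the tangent space of $\mathcal{M}_{sph}$ at $h_*$, for which $0$ is a semisimple eigenvalue with $N(A_0) \oplus R(A_0) = E_0$, and whose remaining spectrum lies strictly in the left half-plane. These are precisely the remaining hypotheses of normal stability.

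With all hypotheses of Theorem~\ref{thm:normally-stable} verified, that theorem yields a $\delta' > 0$ such that every initial datum $h_0 \in B_{E_\mu}(h_*, \delta')$ (intersected with $V_\mu$) gives rise to a global solution converging exponentially to some $\bar h \in \mathcal{M}_{sph}$ in the topology of $E_{\bar\mu}$, whenever $\bar\mu \in [\beta,1)$. The statement of Theorem~\ref{thm:Spheres} allows $\bar\mu \in (0,1)$, so for $\bar\mu < \beta$ I would simply combine the convergence in $E_\beta$ with the continuous embedding $E_\beta \hookrightarrow E_{\bar\mu}$ to recover exponential convergence in the weaker topology as well. Taking $\delta$ small enough that $|h_0|_{bc^{1+\alpha}(\Sigma)} < \delta$ forces $h_0 \in V_\mu \cap B_{E_\mu}(h_*, \delta')$ completes the argument.

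The only nontrivial point, and the main potential obstacle, is ensuring that the spectral analysis of \cite[Section 3]{EMS98}, which is naturally formulated with domain $E_1 = bc^{4+\alpha}(\Sigma)$, yields exactly the operator $A_0$ defined in \eqref{A0} using Fr\'echet derivatives computed with respect to $E_{\bar\mu}$. Since $A_0$ is realized on $E_1$ in either case and the normal-stability conditions (i)--(iv) concern properties of this realization (manifold structure in $E_1$, kernel/range decomposition in $E_0$, and spectrum), this reconciliation is straightforward, but it is the place where one must be careful in connecting the two frameworks.
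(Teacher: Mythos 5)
Your proposal is correct and takes essentially the same route as the paper, which simply invokes normal stability of $h_*\equiv 0$ from \cite[Section 3]{EMS98} and applies Theorem~\ref{thm:normally-stable}, relying on the verification of {\bf (H1)}, {\bf (H2)}, and \eqref{A-F1-F2} already carried out for general compact $\Sigma$ via \eqref{RegularityOnSigma}. Your additional detail --- the $(n+1)$-dimensional manifold $\mathcal{M}_{sph}$, the identification of $N(A_0)$ with its tangent space via the spectral data of \cite{EMS98}, and the embedding $E_\beta\hookrightarrow E_{\bar\mu}$ to extend the range of $\bar\mu$ from $[\beta,1)$ to $(0,1)$ --- correctly fills in what the paper leaves implicit.
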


An immediate consequence of Theorem~\ref{thm:Spheres} is a relaxation of convexity constraints
for stable perturbations of a sphere. In particular, note that every $bc^{1+\alpha}$--neighborhood 
of a sphere contains non--convex hypersurfaces.
This corollary provides a different approach to the same result in \cite{EM10}, 
where the authors prove the claim by showing that non--convex perturbations 
of spheres exist in $B_{p,2}^{5/2-4/p}(\Sigma)$.

\begin{cor}
	There exist non--convex hypersurfaces $\Gamma_0$ such that the solution $h(\cdot,h_0)$ to
	\eqref{SDFlowOnSigma}, with $\Gamma(h_0) = \Gamma_0,$ exists globally in time and 
	converges exponentially fast to a sphere.
\end{cor}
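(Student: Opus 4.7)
The plan is to combine Theorem~\ref{thm:Spheres} with the observation that the $bc^{1+\alpha}(\Sigma)$ topology is too weak to control second derivatives, and hence cannot detect convexity of $\Gamma(h_0)$, which is governed by the sign of the second fundamental form $II_{\Gamma(h)} = L_\Sigma - \nabla_\Sigma^2 h + R(h,\partial^1 h)$, where the remainder $R$ depends only on $h$ and $\partial^1 h$. Given the threshold $\delta > 0$ supplied by Theorem~\ref{thm:Spheres}, it suffices to exhibit a single admissible $h_0 \in V_\mu$ with $|h_0|_{bc^{1+\alpha}(\Sigma)} < \delta$ such that $\Gamma(h_0)$ fails to be convex; the corollary then follows directly from Theorem~\ref{thm:Spheres}.

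To construct such an $h_0$, I fix $p_0 \in \Sigma$, pick local coordinates $(x_1,\ldots,x_{n-1})$ around $p_0$, and choose a smooth cutoff $\chi$ compactly supported near $p_0$ with $\chi(p_0)=1$. For an integer $N$ and amplitude $\varepsilon_N > 0$ to be determined, I define
\[
h_N(p) := \varepsilon_N\, \chi(p)\, \cos\bigl(N x_1(p)\bigr).
\]
A direct calculation gives $|h_N|_{C^{1+\alpha}(\Sigma)} \le C\, \varepsilon_N N^{1+\alpha}$, while $|\partial_{x_1}^2 h_N(p_0)| = \varepsilon_N N^2$. Taking $\varepsilon_N := N^{-(1+\alpha+\gamma)}$ for any fixed $\gamma \in (0, 1-\alpha)$ yields $|h_N|_{C^{1+\alpha}(\Sigma)} \to 0$ while $|\partial_{x_1}^2 h_N(p_0)| = N^{1-\alpha-\gamma} \to \infty$ as $N \to \infty$. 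Each $h_N$ is smooth and bounded, hence automatically lies in $bc^{1+\alpha}(\Sigma)$ (since $BC^\infty \subset bc^{1+\alpha}$ trivially), with the little-H\"older norm coinciding with the $BC^{1+\alpha}$ norm on this smooth function; moreover $h_N \in V_\mu$ once $\varepsilon_N$ is small.

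To see that $\Gamma(h_N)$ is non-convex for $N$ large, I use the expansion above together with $L_\Sigma = (1/r) I$ for the sphere of radius $r$ and the uniform bound on the remainder $R$ in terms of $|h_N|_{C^1}$ (which vanishes as $N \to \infty$). At $p_0$, the principal curvature of $\Gamma(h_N)$ in the direction $\partial_{x_1}$ equals $1/r - \partial_{x_1}^2 h_N(p_0) + o(1)$ as $N \to \infty$, and is therefore strictly negative for $N$ large. Choosing $N$ large enough that additionally $|h_N|_{bc^{1+\alpha}(\Sigma)} < \delta$, I set $h_0 := h_N$ and $\Gamma_0 := \Gamma(h_0)$; Theorem~\ref{thm:Spheres} then produces the desired global solution $h(\cdot, h_0)$ converging exponentially to a sphere in $E_{\bar\mu}$. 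The main technical step is the interpolation estimate $|h_N|_{C^{1+\alpha}(\Sigma)} \le C\,\varepsilon_N N^{1+\alpha}$ carried out uniformly in charts; this is standard but is the only place where the precise scaling in $\alpha$ must be checked. The expansion of $II_{\Gamma(h)}$ in terms of $\nabla_\Sigma^2 h$ and lower-order data is standard (cf.\ the references for $\mathcal{H}_h$ used in Section~4.4).
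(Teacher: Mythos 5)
Your proof is correct and follows essentially the same approach the paper takes: the paper states the corollary as an immediate consequence of Theorem~\ref{thm:Spheres} together with the observation (asserted without proof) that every $bc^{1+\alpha}$--neighborhood of the sphere contains non--convex hypersurfaces, and you simply supply the explicit oscillatory construction $h_N = \varepsilon_N \chi \cos(Nx_1)$ with $\varepsilon_N = N^{-(1+\alpha+\gamma)}$ that makes this assertion concrete. The scaling $|h_N|_{C^{1+\alpha}} \lesssim \varepsilon_N N^{1+\alpha} \to 0$ while $\partial_{x_1}^2 h_N(p_0) = N^{1-\alpha-\gamma} \to \infty$ is verified correctly, and the conclusion that some principal curvature changes sign is sound. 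One small imprecision worth noting: the decomposition $II_{\Gamma(h)} = L_\Sigma - \nabla_\Sigma^2 h + R(h,\partial^1 h)$ should more accurately read $II_{\Gamma(h)} = L_\Sigma - c(h,\partial^1 h)\,\nabla_\Sigma^2 h + R(h,\partial^1 h)$, i.e.\ the second-order coefficient itself depends on $(h,\partial^1 h)$ (cf.\ \eqref{CurvatureStructure}); this does not affect your conclusion since $c(h,\partial^1 h)$ converges to a nondegenerate tensor as $|h|_{C^1} \to 0$, so the blow-up of $\partial_{x_1}^2 h_N(p_0)$ still forces a sign change.
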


\bigskip\bigskip
\bibliographystyle{plain}
\bibliography{Bib}

\end{document}